\newtheorem{theorem}{Theorem} %[section]
\newtheorem{proposition}[theorem]{Proposition}
\newtheorem{lemma}[theorem]{Lemma}
\newtheorem{corollary}[theorem]{Corollary}
\theoremstyle{definition}
\newtheorem{example}[theorem]{Example}
\newtheorem{remark}[theorem]{Remark}
\newcounter{FNC}[page]
\def\fauxfootnote#1{{\addtocounter{FNC}{2}\Magenta{$^\fnsymbol{FNC}$}%
     \let\thefootnote\relax\footnotetext{\Magenta{$^\fnsymbol{FNC}$#1}}}}
\newcommand{\spec}{\mathop{\rm spec}\nolimits}
\newcommand{\pls}{\mathscr{P}^{\infty}}
\newcommand{\scrA}{{\mathscr{A}}}
\newcommand{\coscrA}{{\mathit{co}\mathscr{A}}}
\newcommand{\scrL}{{\mathscr{L}}}
\newcommand{\calA}{{\mathcal{A}}}
\newcommand{\calI}{{\mathcal{I}}}
\newcommand{\calO}{{\mathcal{O}}}
\newcommand{\calV}{{\mathcal{V}}}
\newcommand{\calX}{{\mathcal{X}}}
\newcommand{\bm}{{\bf m}}
\renewcommand{\P}{{\mathbb P}}
\newcommand{\C}{{\mathbb C}}
\newcommand{\bS}{{\mathbb S}}
\newcommand{\N}{{\mathbb N}}
\newcommand{\R}{{\mathbb R}}
\newcommand{\T}{{\mathbb T}}
\newcommand{\U}{{\mathbb U}}
\newcommand{\Z}{{\mathbb Z}}
\newcommand{\ini}{\mbox{\rm in}}
\newcommand{\Log}{\mathop{\rm Log}\nolimits}
\newcommand{\Hom}{\mathop{\rm Hom}\nolimits}
\newcommand{\DeCo}[1]{\Blue{#1}}
\newcommand{\demph}[1]{\DeCo{{\sl #1}}}
\newcommand{\hooklongrightarrow}{\lhook\joinrel\longrightarrow}
\newcommand{\twoheadlongrightarrow}{\relbar\joinrel\twoheadrightarrow}
\title{The Phase Limit Set of a Variety}
\author{Mounir Nisse}
\address{Mounir Nisse\\
          Department of Mathematics\\
         Texas A\&M University\\
         College Station\\
         Texas \ 77843\\
         USA}
\email{nisse@math.tamu.edu}
\urladdr{www.math.tamu.edu/\~{}nisse}
\author{Frank Sottile}
\address{Frank Sottile\\
         Department of Mathematics\\
         Texas A\&M University\\
         College Station\\
         Texas \ 77843\\
         USA}
\email{sottile@math.tamu.edu}
\urladdr{www.math.tamu.edu/\~{}sottile}
\thanks{Research of Sottile supported in part by NSF grant DMS-1001615 and the Institut
  Mittag-Leffler.} 
\subjclass{14T05, 32A60}
\begin{document}

\begin{abstract}
 A coamoeba is the image of a subvariety of a complex torus under the argument map
 to the real torus.
 We describe the structure of the boundary of the coamoeba of a variety,
 which we relate to its logarithmic limit set.
 Detailed examples of lines in three-dimensional space illustrate and motivate these results.
\end{abstract}

\maketitle
%%%%%%%%%%%%%%%%%%%%%%%%%%%%%%%%%%%%%%%%%%%%%%%%%%%%%%%%%%%%%%%%%%%%%%%%%%%%%

%%%%%%%%%%%%%%%%%%%%%%%%%%%%%%%%%%%%%%%%%%%%%%%%%%%%%%%%%%%%%%%%%%%%

\section{Introduction}

A coamoeba is the image of a subvariety of a complex torus under the argument map
to the real torus.
Coamoebae are cousins to amoebae, which are images of subvarieties under
the coordinatewise logarithm map $z\mapsto\log|z|$.
Amoebae were introduced by Gelfand, Kapranov, and Zelevinsky in 1994~\cite{GKZ}, and have
subsequently been widely studied~\cite{KeOk,Mi,PaRu,Pur}.
Coamoebae were introduced by Passare in a talk in 2004, 
and they appear to have many beautiful and interesting
properties. 
For example, coamoebae of $\calA$-discriminants in dimension two are unions of
two non-convex polyhedra~\cite{NiPa}, and a hypersurface coamoeba has an associated
arrangement of codimension one tori contained in its closure~\cite{Nisse}. 

Bergman~\cite{Berg} introduced the logarithmic limit set $\scrL^\infty(X)$ of a
subvariety $X$ of the torus as the set of limiting directions of points in its amoeba.
Bieri and Groves~\cite{BiGr}  showed that $\scrL^\infty(X)$
is a rational polyhedral complex in the sphere.
Logarithmic limit sets are now called tropical algebraic varieties~\cite{SS}.
For a hypersurface  $\calV(f)$, logarithmic limit set $\scrL^\infty(\calV(f))$ consists of the
directions of non-maximal cones in the outer normal fan of the Newton polytope of $f$.
We introduce a similar object for coamoebae 
and establish a structure theorem for coamoebae 
similar to that of Bergman and of Bieri and Groves for amoebae.

Let \demph{$\coscrA(X)$} be the coamoeba of a subvariety $X$ of $(\C^*)^n$ with ideal $I$. 
The  \demph{phase limit set} of $X$, \DeCo{$\pls(X)$}, is the set of accumulation
points of arguments of sequences in $X$ with unbounded logarithm.
For $w\in\R^n$, the initial variety $\ini_w X\subset(\C^*)^n$ is the variety of the initial
ideal of $I$. 
The fundamental theorem of tropical geometry asserts that $\ini_w X\neq \emptyset$
exactly when the direction of $-w$ lies in $\scrL^\infty(X)$.
We establish its analog for coamoebae.

%%%%%%%%%%%%%%%%%%%%%%%%%%%%%%%%%%%%%%%%%%%%%%%%%%%%%%%%%%%%%%%%%%
\begin{theorem}\label{T:one}
  The closure of $\coscrA$ is $\coscrA(X)\cup \pls(X)$, and 
\[
   \pls(X)\ =\  \bigcup_{w\neq 0} \coscrA(\ini_w X)\,.
\]
\end{theorem}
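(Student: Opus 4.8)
The plan is to prove the two assertions separately, getting the closure statement almost for free and reserving the real work for the description of $\pls(X)$. Throughout I use that $z\mapsto(\Log z,\arg z)$ is a homeomorphism $(\C^*)^n\to\R^n\times\T^n$, under which $\coscrA(X)$ is the image of $X$ in $\T^n$, and I adopt the convention that $\ini_w f$ retains the terms of $f$ of lowest $w$-weight $\langle w,a\rangle$.

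First I would dispatch the closure statement. A point of $\overline{\coscrA(X)}$ is $\lim_k\arg(x_k)$ for some $x_k\in X$. Passing to a subsequence, either $\Log(x_k)$ is bounded, in which case compactness together with the fact that $X$ is closed in the torus forces $x_k$ to converge to a point of $X$, so the limit lies in $\coscrA(X)$; or $\Log(x_k)$ is unbounded, so the limit lies in $\pls(X)$ by definition. The reverse inclusions are immediate, since $\coscrA(X)\subseteq\overline{\coscrA(X)}$ trivially and every point of $\pls(X)$ is by definition a limit of arguments of points of $X$.

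For the inclusion $\bigcup_{w\neq0}\coscrA(\ini_w X)\subseteq\pls(X)$, fix $w\neq0$ with $\ini_w X\neq\emptyset$ and $y\in\ini_w X$. The engine is the degeneration $e^{-sw}\cdot X\to\ini_w X$ as $s\to+\infty$: writing $t=e^{-s}\to 0^+$, for $f=\sum_a c_a z^a\in I$ one factors the least power of $t$ out of $f(t^w z)$ and recovers $\ini_w f$ in the limit. Granting the geometric counterpart, that every point of $\ini_w X$ is of the form $\lim_s e^{-sw}x_s$ with $x_s\in X$, I choose such $x_s$ converging to $y$. Since multiplication by the positive real vector $e^{-sw}$ preserves arguments, $\arg(x_s)=\arg(e^{-sw}x_s)\to\arg(y)$, while $\Log(x_s)=\Log(e^{-sw}x_s)+sw$ is unbounded because $w\neq0$; hence $\arg(y)\in\pls(X)$. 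The sub-lemma I am granting is standard for rational $w$, where it is a flat Gröbner degeneration over $\mathbb A^1$ whose special fibre $\ini_w X$ sits in the closure of the family; for general real $w$ it reduces to the rational case because $\ini_w X$ depends only on the relatively open cone of the Gröbner fan containing $w$, which meets the rational points, or alternatively by lifting $y$ to a Puiseux-series point of $X$ via the fundamental theorem and evaluating along $t\to0^+$.

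For the reverse inclusion, given $\theta\in\pls(X)$ I choose $x_k\in X$ with $\arg(x_k)\to\theta$ and $w_k:=\Log(x_k)$ unbounded, and translate back, setting $p_k:=e^{-w_k}x_k$, so that $\Log(p_k)=0$, $\arg(p_k)=\arg(x_k)$, and after a subsequence $p_k\to y_0$, the point with $\Log=0$ and $\arg=\theta$. After a further subsequence $w_k/|w_k|\to u\in\scrL^\infty(X)$, whence $\ini_{-u}X\neq\emptyset$ by the fundamental theorem. Substituting $x_k=e^{w_k}p_k$ into $0=f(x_k)=\sum_a c_a e^{\langle w_k,a\rangle}p_k^a$ and dividing by $e^{M_k}$ with $M_k=\max_a\langle w_k,a\rangle$, the terms with $\langle u,a\rangle$ below the maximum die in the limit, and the goal is to conclude that a real translate $e^{v}y_0$ satisfies $\ini_{-u}f(e^{v}y_0)=0$ for every $f\in I$; since $e^{v}$ is a positive real vector it preserves arguments, giving $\theta=\arg(e^{v}y_0)\in\coscrA(\ini_{-u}X)$ with $-u\neq0$. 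The hard part is exactly this last step: the surviving coefficients are limits of $e^{\langle w_k,a\rangle-M_k}\in[0,1]$ and a priori yield a \emph{weighted} initial equation rather than $\ini_{-u}f$, and need not even converge. I expect the resolution to rest on the finiteness and rationality of the tropical (Gröbner) complex of Bieri--Groves: after a subsequence one may assume the $w_k$ follow a single cone, and the second-order data $s_k(w_k/|w_k|-u)$, when it converges, is absorbed into the translation vector $v$ to produce exactly $\ini_{-u}f(e^{v}y_0)=0$; when it fails to converge one refines $u$ through an iterated initial ideal $\ini_{u'}(\ini_u X)=\ini_{u+\varepsilon u'}X$, dropping dimension and terminating in a cone where the translated points converge cleanly. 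Making this reconciliation precise, together with the degeneration surjectivity for irrational $w$ used in the previous step, is where the polyhedral structure theory does the essential work.
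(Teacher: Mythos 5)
Your treatment of the closure statement and of the inclusion $\bigcup_{w\neq 0}\coscrA(\ini_w X)\subseteq\pls(X)$ is essentially the paper's: the latter is exactly the degeneration $t^{-w}X\to\ini_w X$ along real positive $t$, and the sub-lemma you grant is the paper's Proposition~\ref{P:initial_scheme} together with a curve-selection step to arrange that the approach to a point of the special fibre happens along a sequence of \emph{real positive} $t_n$ (this is what lets you say the translation preserves arguments, and it does require an argument, not just flatness). The paper also works only with integral $w$, which suffices by the fan structure, so your worry about irrational $w$ is easily dispatched.

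The genuine gap is in the reverse inclusion, and you have located it yourself: after normalizing to $p_k=e^{-w_k}x_k\to y_0$ and extracting $w_k/|w_k|\to u$, the limiting relation obtained from $0=\sum_a c_a e^{\langle w_k,a\rangle-M_k}p_k^a$ has coefficients that are limits of numbers in $[0,1]$ and need not be the characteristic function of the $u$-initial support; worse, the ``second-order data'' $|w_k|\bigl(w_k/|w_k|-u\bigr)$ that you propose to absorb into a translation vector $v$ can be unbounded even after passing to subsequences, so no single $e^v$ exists, and your fallback via iterated initial ideals $\ini_{u'}(\ini_u X)$ has no specified decreasing quantity or termination argument. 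As written this direction is a plan, not a proof. The paper avoids the entire convergence analysis by compactifying: it takes the fan $\Sigma$ on the cone over $-\scrL^\infty(X)$, forms the toric variety $Y_\Sigma$ and the (complete) tropical compactification $\overline{X}$, so the unbounded sequence has an accumulation point $x$ in some boundary orbit $\calO_\sigma$ with $\sigma\neq 0$; Lemma~\ref{L:initialIdeals} identifies $\overline{X}\cap\calO_\sigma$ with $\ini_\sigma X/\T_{\langle\sigma\rangle}$, and the $\T_N$-equivariant splitting $\pi_\sigma\colon V_\sigma\to\calO_\sigma$ (which is continuous, the identity on $\calO_\sigma$, and the quotient by $\T_{\langle\sigma\rangle}$ on the open torus) carries $\theta=\lim\arg(x_n)$ to $\arg(x)\in\coscrA(\ini_\sigma X)/\U_{\langle\sigma\rangle}$; since $\coscrA(\ini_\sigma X)$ is a union of $\U_{\langle\sigma\rangle}$-orbits, $\theta$ itself lies in $\coscrA(\ini_\sigma X)$. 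If you want to salvage a direct sequence argument, the compactness that does the work must come from somewhere, and in the paper it comes from Tevelev's completeness result; you would need either that or a careful Bieri--Groves-type induction that you have not supplied.
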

%%%%%%%%%%%%%%%%%%%%%%%%%%%%%%%%%%%%%%%%%%%%%%%%%%%%%%%%%%%%%%%%%%

Johansson~\cite{Johansson} used different methods to prove this when $X$ is a
complete intersection. 

The cone over the logarithmic limit set admits the structure of a rational polyhedral fan
$\Sigma$ in which all weights $w$ in the relative interior of a cone $\sigma\in\Sigma$
give the same initial scheme $\ini_w X$.
Thus the union in Theorem~\ref{T:one} is finite and is indexed by the images of these cones
$\sigma$ in the logarithmic limit set of $X$.
The logarithmic limit set or tropical algebraic variety is a combinatorial shadow of $X$
encoding many properties of $X$.
While the coamoeba of $X$ is typically not purely combinatorial (see the examples of lines in
$(\C^*)^3$ in Section~\ref{S:lines}), the phase limit set does provide a combinatorial skeleton
which we believe will be useful in the further study of coamoebae.

We give definitions and background in Section~\ref{S:defs}, and 
detailed examples of lines in three-dimensional space in Section~\ref{S:lines}.
These examples are reminiscent of the concrete descriptions of amoebae of lines in~\cite{Th}.
We prove Theorem~\ref{T:one} in Section~\ref{S:phase}.

%%%%%%%%%%%%%%%%%%%%%%%%%%%%%%%%%%%%%%%%%%%%%%%%%%%%%%%%%%%%%%%%%%%%

%%%%%%%%%%%%%%%%%%%%%%%%%%%%%%%%%%%%%%%%%%%%%%%%%%%%%%%%%%%%%%%%%%%%%%%%%%%%%
\section{Coamoebae, tropical varieties and initial ideals}\label{S:defs}

As a real algebraic group, the set $\T:=\C^*$ of invertible complex numbers is
isomorphic to $\R\times\U$ under the map $(r,\theta)\mapsto e^{r+\sqrt{-1}\theta}$.
Here,  $\U$ is the set of complex numbers of norm 1
which may be identified with $\R/2\pi\Z$.
The inverse map is $z\mapsto (\log|z|,\arg(z))$.

Let $M$ be a free abelian group of finite rank and $N=\Hom(M,\Z)$ its dual group.
We use $\langle\cdot,\cdot\rangle$ for the pairing between $M$ and $N$.
The group ring $\C[M]$ is the ring of Laurent polynomials with exponents in
$M$.
It is the coordinate ring of a torus $\T_N$ which is identified with 
$N\otimes_\Z\T = \Hom(M,\T)$, the set of group homomorphisms $M\to \T$.
There are natural maps $\Log\colon\T_N\to\R_N=N\otimes_\Z\R$ and 
$\arg\colon\T_N\to\U_N=N\otimes_\Z\U$, which are induced by the maps
$\C^*\ni z\mapsto \log|z|$ and $z\mapsto\arg(z)\in\U$.
Maps $N\to N'$ of free abelian groups induce corresponding maps $\T_N\to\T_{N'}$ of tori,
and also of $\R_N$ and $\U_N$.
If $n$ is the rank of $N$, we may identify $N$ with $\Z^n$, which identifies $\T_N$ with 
$\T^n$, $\U_N$ with $\U^n$, and $\R_N$ with $\R^n$.

The \demph{amoeba $\scrA(X)$} of a subvariety $X\subset\T_N$ is its image 
under the map $\Log\colon\T_N\to\R_N$, and the 
\demph{coamoeba} \DeCo{$\coscrA(X)$} of $X$ is the image of $X$ under the argument
map $\arg\colon\T_N\to\U_N$. 
An amoeba has a geometric-combinatorial structure at infinity encoded by the 
logarithmic limit set~\cite{Berg,BiGr}.
Coamoebae similarly have phase limit sets which have a related combinatorial
structure that we define and study in Section~\ref{S:phase}.

If we identify $\C^*$ with $\R^2\setminus\{(0,0)\}$, then the map 
$\arg\colon\C^*\to\U$ given by $(a,b)\mapsto (a,b)/\sqrt{a^2+b^2}$ is a real
algebraic map.
Thus, coamoebae, as they are the image of a real algebraic subset of the real algebraic
variety $\T_N$ under the real algebraic map $\arg$, are semialgebraic subsets of
$\U_N$~\cite{BPR}. 
It would be very interesting to study them as semi-algebraic sets, in
particular, what are the equations and inequalities satisfied by a coamoeba?
When $X$ is a Grassmannian, such a description would generalize
Richter-Gebert's five-point condition for phirotopes from rank two to arbitrary
rank~\cite{BKRG}. 

Similarly, we may replace the map $\C^*\ni z\mapsto \log|z|\in\R$ in the definition of
amoebae by the map $\C^*\ni z\mapsto |z|\in\R^+:=\{r\in\R\mid r>0\}$ to obtain the 
\demph{algebraic amoeba} of $X$, which is a subset of $\R^+_N$.
The algebraic amoeba is a semi-algebraic subset of $\R^+_N$, and we also 
ask for its description as a semi-algebraic set.

%%%%%%%%%%%%%%%%%%%%%%%%%%%%%%%%%%%%%%%%%%%%%%%%%%%%%%%%%%%%%%%%%%%%%%%%%%%%%%%%%%%%
\begin{example}\label{Ex:lineP2}
 Let $\ell\subset \T^2$ be defined by $x+y+1=0$.
 The coamoeba $\coscrA(\ell)$ is the set of points of $\U^2$ of the form
 $(\arg(x),\pi+\arg(x+1))$ for $x\in\C\setminus\{0,-1\}$. 
 If $x$ is real, then these points are $(\pm\pi,0)$, $(\pm\pi,\pm\pi)$, and
 $(0,\pm\pi)$ if $x$ lies in the intervals $(-\infty,-1)$, $(-1,0)$, and $(0,\infty)$
 respectively. 
 For other values, consider the picture below in the complex plane.
 \[
   \begin{picture}(180,53)(-40,-1)
    \put(0,0){\includegraphics{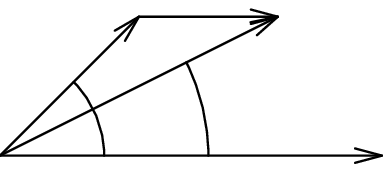}}
    \put(18,43){$x$}
    \put(82,43){$x+1$}
    \put(115,2){$\R$}    \put(-9,-2){$0$}
    \put(84,22){$\arg(x+1)$} \put(82,24.5){\vector(-4,-1){20}}
    \put(-40,16){$\arg(x)$}\put(-4,18.5){\vector(4,-1){30}}
   \end{picture}
 \]
 For $\arg(x)\not\in\{0,\pi\}$ fixed, $\pi+\arg(x{+}1)$ can take on any value strictly
 between $\pi+\arg(x)$ (for $w$ near $\infty$) and
 $0$  (for $x$ near $0$), and thus 
 $\coscrA(\ell)$ consists of the three points $(\pi,0)$, $(\pi,\pi)$, and
 $(0,\pi)$ and the interiors of the two triangles displayed below in the
 fundamental domain $[-\pi,\pi]^2\subset \R^2$ of $\U^2$.
 This should be understood modulo $2\pi$, so that $\pi=-\pi$.
  \begin{equation}\label{Eq:two_triangles}
   \raisebox{-50pt}{%
    \begin{picture}(215,96)(-20,-12)
     \put(-2,-2){\includegraphics{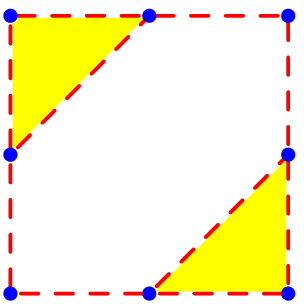}}
     \put(-25,-2){$-\pi$} \put(-15,38){$0$} \put(-15,77){$\pi$}
     \put(-10,-12){$-\pi$} \put(35,-12){$0$} \put(77,-12){$\pi$}
     \put(100,2){\vector(1,0){70}} \put(115,8){$\arg(x)$}
     \put(93,15){\vector(0,1){56}} \put(97,42){$\arg(y)=\pi+\arg(x{+}1)$}
   \end{picture}}
  \end{equation}
 The coamoeba is the complement of the region 
 \[
    \{(\alpha,\beta)\in[-\pi,\pi]^2\;:\;
   |\alpha-\beta|\ \leq\ \pi=\arg(-1)\}\,,
 \]
together with the three images of real points $(\pm\pi,0)$, $(\pm\pi,\pm\pi)$, and
 $(0,\pm\pi)$.

Given a general line $ax+by+c=0$ with $a,b,c\in\C^*$, we may 
replace $x$ by $cx'/a$ and $y$ by $cy'/b$, to obtain the line $x'+y'+1=0$, 
with coamoeba~\eqref{Eq:two_triangles}.
This transformation rotates the coamoeba~\eqref{Eq:two_triangles} 
by $\arg(a/c)$ horizontally and $\arg(b/c)$ vertically.
\end{example}
%%%%%%%%%%%%%%%%%%%%%%%%%%%%%%%%%%%%%%%%%%%%%%%%%%%%%%%%%%%%%%%%%%%%%%%%%

Let $f\in\C[M]$ be a polynomial with support $\calA\subset M$,
 \begin{equation}\label{Eq:laurentPolynomial}
   f\ =\ \sum_{\bm\in\calA} c_\bm\cdot \xi^\bm\,,\qquad c_\bm\in\C^*\,,
 \end{equation}
where we write $\xi^\bm$ for the element of $\C[M]$ corresponding to $\bm\in M$.
Given $w\in\R_N$, let $w(f)$ be the minimum of $\langle\bm,w\rangle$ for $\bm\in\calA$.
Then the initial form $\ini_w f$ of $f$
with respect to $w\in\R_N$ is the polynomial $\ini_wf\in\C[M]$ defined by
\[
   \DeCo{\ini_wf}\ :=\ \sum_{\langle\bm,w\rangle=w(f)} c_\bm\cdot \xi^\bm\,.
\]
Given an ideal $I\subset\C[M]$ and $w\in\R_N$, the \demph{initial ideal} with
respect to $w$ is 
\[
   \DeCo{\ini_w I}\ :=\ \langle\ini_w f\mid f\in  I\rangle\ \subset\ 
    \C[M]\,.
\]
Lastly, when $I$ is the ideal of a subvariety $X$, the \demph{initial scheme} 
$\DeCo{\ini_wX}\subset\T_N$ is defined by the initial ideal $\ini_w I$.

The sphere $\DeCo{\bS_N}:=(\R_N\setminus\{0\})/\R^+$ is the set of directions in $\R_N$.
Write $\pi\colon \R_N\setminus\{0\}\to\bS_N$ for the projection.
The \demph{logarithmic limit set $\scrL^\infty(X)$} of a subvariety $X$ of $\T_N$
is the set of accumulation points in $\bS_N$ of sequences $\{\pi(\Log(x_n))\}$ where
$\{x_n\}\subset X$ is an unbounded set.
A sequence $\{x_n\}\subset\T_N$ is unbounded if its sequence of
logarithms $\{\Log(x_n)\}$ is unbounded.

A \demph{rational polyhedral cone} $\sigma\subset\R_N$ is the set of points $w\in\R_N$
which satisfy finitely many inequalities and equations of the form
\[
   \langle \bm,w \rangle\ \geq\ 0
  \qquad\mbox{and}\qquad
   \langle \bm',w \rangle\ =\ 0\,,
\]
where $\bm,\bm'\in M$.
The \demph{dimension} of $\sigma$ is the dimension of its linear span, and \demph{faces}
of $\sigma$ are proper subsets of $\sigma$ obtained by replacing some inequalities by
equations. 
The relative interior of $\sigma$ consists of its points not lying in any face.
Also, $\sigma$ is determined by $\sigma\cap N$, which is a finitely generated subsemigroup 
of $N$.

A \demph{rational polyhedral fan $\Sigma$} is a collection of rational polyhedral cones in
$\R_N$ in which every two cones of $\Sigma$ meet along a common face.

%%%%%%%%%%%%%%%%%%%%%%%%%%%%%%%%%%%%%%%%%%%%%%%%%%%%%%%%%%%%%%%%%%%%
\begin{theorem}\label{T:FTTG}
  The cone in $\R_N$ over the negative $-\scrL^\infty(X)$ of the logarithmic limit set of
  $X$ is the set of $w\in\R_N$ such that $\ini_wX\neq\emptyset$.
  Equivalently, it is the set of $w\in\R_N$ such that for every $f\in\C[M]$ lying in the
  ideal $I$ of $X$, $\ini_w f$ is not a monomial.

  This cone over $-\scrL^\infty(X)$ admits the structure of a rational polyhedral fan
  $\Sigma$ with the property that  if $u,w$ lie in the relative interior of a cone $\sigma$
  of $\Sigma$, then $\ini_u I=\ini_w I$.
\end{theorem}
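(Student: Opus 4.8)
The plan is to treat the three assertions in turn: the equivalence of the two descriptions of the cone, the identification of $\{w:\ini_w X\neq\emptyset\}$ with the cone over $-\scrL^\infty(X)$, and the polyhedral fan structure. First I would dispatch the equivalence of the two characterizations of the cone. Since $\C$ is algebraically closed and $\T_N$ is a torus, $\ini_w X=\emptyset$ holds if and only if $\ini_w I=\C[M]$, by the Nullstellensatz; and as every Laurent monomial is a unit, $\ini_w I=\C[M]$ holds if and only if $\ini_w I$ contains a monomial. It remains to see that $\ini_w I$ contains a monomial exactly when some $f\in I$ has monomial initial form $\ini_w f$. One implication is immediate: if $\ini_w f=c\,\xi^\bm$ then $c^{-1}\xi^{-\bm}\ini_w f=1\in\ini_w I$. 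For the converse I would invoke Gr\"obner theory for the weight $w$: after choosing a term order refining $w$, the ideal $\ini_w I$ is generated by the $w$-initial forms of a finite Gr\"obner basis, and a monomial lying in $\ini_w I$ is pulled back to a single element of $I$ by grouping the generating relation into its $w$-homogeneous components and cancelling, so that the monomial is realized as $\ini_w f$ for some $f\in I$.

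Next I would establish the geometric identification, which is the heart of the matter. For the inclusion that $\ini_w X\neq\emptyset$ forces $\pi(w)\in-\scrL^\infty(X)$, I would realize $\ini_w X$ as a Gr\"obner (toric) degeneration of $X$: the initial scheme is the flat limit of the translates of $X$ under the one-parameter action in the direction $w$. A point $y\in\ini_w X$ then lifts to a family $x_s\in X$ with $\Log(x_s)=-s\,w+O(1)$ as $s\to\infty$, so $\pi(\Log(x_s))\to\pi(-w)$ and the direction of $-w$ lies in $\scrL^\infty(X)$. Conversely, given an unbounded sequence $\{x_n\}\subset X$ with $\pi(\Log(x_n))\to v$, set $w=-v$; after passing to a subsequence so that the arguments $\arg(x_n)$ and the bounded coordinates converge, I would check that the limiting data define a point of $\ini_w X$. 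Indeed, for each $f\in I$ the terms of $f$ of largest modulus along the sequence are precisely those of $\ini_w f$, and dividing the relation $f(x_n)=0$ by this common dominant modulus and passing to the limit yields a nonzero simultaneous solution of $\ini_w f=0$ for all $f\in I$, hence a point of $\ini_w X$.

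Finally, for the fan structure I would appeal to the Gr\"obner fan. As $w$ ranges over $\R_N$ the ideal $\ini_w I$ takes only finitely many values, and the locus of $w$ giving a fixed initial ideal is a relatively open rational polyhedral cone cut out by the equalities and inequalities $\langle\bm-\bm',w\rangle\geq 0$ read off from a Gr\"obner basis; integrality of the exponents $\bm-\bm'\in M$ gives rationality. These cones together with their faces form the fan $\Sigma$, on whose relatively open cones $\ini_w I$ is by construction constant, so that $\ini_u I=\ini_w I$ whenever $u,w$ lie in the relative interior of a single $\sigma$. The cone over $-\scrL^\infty(X)$ is then the subfan of those $\sigma$ whose initial ideals contain no monomial, which by the first two parts is exactly $\{w:\ini_w X\neq\emptyset\}$.

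I expect the main obstacle to be the two limit constructions of the middle step, since both require controlling the full complex coordinates of points rather than only their moduli, and relating these limits to the simultaneous vanishing of all initial forms. A technical wrinkle is that for irrational $w$ the one-parameter subgroup is not algebraic, so the degeneration argument must be run for a rational $w'$ in the same Gr\"obner cone as $w$---legitimate because $\ini_{w'}I=\ini_w I$ there---and only then transported back to $w$.
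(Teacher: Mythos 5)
First, a point of orientation: the paper does not prove Theorem~\ref{T:FTTG} at all. It is stated as background and explicitly attributed to Bergman, Bieri--Groves, Einsiedler--Kapranov--Lind, and Speyer--Sturmfels (``the above form is adapted from Speyer and Sturmfels''). So there is no internal proof to compare yours against, and you have in effect set yourself the task of proving the Archimedean fundamental theorem of tropical geometry from scratch. Your architecture is sensible and matches how the literature proceeds: the weak Nullstellensatz plus the $w$-homogeneity lemma for the equivalence of the two characterizations, the Gr\"obner fan for the final assertion, and a one-parameter toric degeneration for the implication $\ini_wX\neq\emptyset\Rightarrow\pi(w)\in-\scrL^\infty(X)$. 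That last step is essentially the paper's own Proposition~\ref{P:initial_scheme} together with the curve-selection argument of Section~\ref{S:initial}, and your observation that $\Log(t^wx_t)=(\log t)\,w+O(1)$ is exactly the right computation; your remark about replacing an irrational $w$ by a rational $w'$ in the same Gr\"obner cone is also correctly placed.

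The one genuine gap is in the converse implication. Given an unbounded sequence $\{x_n\}$ with $\pi(\Log(x_n))\to v$ and $w=-v$, you propose to divide $f(x_n)=0$ by the dominant modulus and pass to the limit to obtain ``a nonzero simultaneous solution of $\ini_wf=0$ for all $f\in I$, hence a point of $\ini_wX$.'' This does not work as stated: writing $\Log(x_n)=r_nu_n$ with $r_n\to\infty$ and $u_n\to v$, the quantities $r_n\langle\bm-\bm',u_n\rangle$ for two exponents $\bm,\bm'$ in the support of $\ini_wf$ (so that $\langle\bm-\bm',v\rangle=0$) may still diverge, so the normalized limit can annihilate some of the terms of $\ini_wf$ and need not define a point of $\T_N$; moreover, different $f\in I$ have different dominant scales, so there is no single limiting point of the torus at which all initial forms vanish. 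Manufacturing an actual point of $\ini_wX$ this way is the hard direction of the theorem. The repair is available from your own first step: by the equivalence you already established, it suffices to show that no $\ini_wf$ is a monomial, and that is the easy dominant-term argument --- if $\ini_wf=c_\bm\xi^\bm$ were a monomial, then $|c_\bm x_n^\bm|$ would strictly dominate every other term of $f(x_n)$ for $n$ large, contradicting $f(x_n)=0$. With that substitution, and with the standard but nontrivial (and here merely asserted) facts that $\ini_wI$ takes only finitely many values and that the Gr\"obner equivalence classes of an ideal of $\C[M]$ form a rational polyhedral fan, your outline becomes a correct proof.
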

%%%%%%%%%%%%%%%%%%%%%%%%%%%%%%%%%%%%%%%%%%%%%%%%%%%%%%%%%%%%%%%%%%%%

It is important to take $-\scrL^\infty(X)$.
This is correct as we use the tropical convention of minimum, which is forced by our use
of toric varieties to prove Theorem~\ref{T:one} in Section~\ref{S:tropicalCompact}.

We write $\ini_\sigma I$ for the initial ideal defined by points in the relative interior
of a cone $\sigma$ of $\Sigma$.
The fan structure $\Sigma$ is not canonical, for it depends upon an identification
$M\xrightarrow{\;\sim\;}\Z^n$.
Moreover, it may be the case that $\sigma\neq\tau$, but $\ini_\sigma I=\ini_\tau I$.

Bergman~\cite{Berg} defined the logarithmic limit set of a subvariety
of the torus $\T_N$, and 
Bieri and Groves~\cite{BiGr} showed it was a finite union of convex polyhedral cones.
The connection to initial ideals was made more explicit through work of Kapranov~\cite{EKL}
and the above form is adapted from Speyer and Sturmfels~\cite{SS}.
The logarithmic limit set of $X$ is now called the tropical algebraic variety of $X$,
and this latter work led to the field of tropical geometry.

%%%%%%%%%%%%%%%%%%%%%%%%%%%%%%%%%%%%%%%%%%%%%%%%%%%%%%%%%%%%%%%%%%%%

%%%%%%%%%%%%%%%%%%%%%%%%%%%%%%%%%%%%%%%%%%%%%%%%%%%%%%%%%%%%%%%%%%%%%%%%%%%%%
\section{Lines in space}\label{S:lines}
%%%%%%%%%%%%%%%%%%%%%%%%%%%%%%%%%%%%%%%%%%%%%%%%%%%%%%%%%%%%%%%%%

We consider coamoebae of lines in three-dimensional space.
We will work in the torus $\T\P^3$ of $\P^3$, which is the quotient of $\T^4$ by the diagonal
torus $\Delta_\T$ and similarly in $\U\P^3$, the quotient of $\U^4$ by the diagonal
$\Delta_\U:=\{(\theta,\theta,\theta,\theta)\mid\theta\in \U\}$.
By \demph{coordinate lines and planes} in $\U\P^3$ we mean the images in $\U\P^3$ of lines
and planes in $\U^4$ parallel to some coordinate plane.

Let $\ell$ be a line in $\P^3$ not lying in any coordinate plane.
Then $\ell$ has a parameterization
 \begin{equation}\label{Eq:parametrization}
  \phi\ \colon\ \P^1\ni[s:t]\ \longmapsto\ 
   [\ell_0(s,t):\ell_1(s,t):\ell_2(s,t):\ell_3(s,t)]\,,
 \end{equation}
where $\ell_0,\ell_1,\ell_2,\ell_3$ are non-zero linear forms which do not all vanish at
the same point.
For $i=0,\dotsc,3$, let $\zeta_i\in\P^1$ be the zero of $\ell_i$.
The configuration of these zeroes determine the coamoeba of $\ell\cap\T\P^3$, which we
will simply write as $\coscrA(\ell)$.

Suppose that two zeroes coincide, say $\zeta_3=\zeta_2$.
Then $\ell_3=a\ell_2$ for some $a\in\C^*$, and so $\ell$ lies in the translated
subtorus $z_3=a z_2$ and its coamoeba $\coscrA(\ell)$ lies in the coordinate subspace of
$\U_3$ defined by $\theta_3=\arg(a)+\theta_2$.
In fact, $\coscrA(\ell)$ is pulled back from the coamoeba of the projection of $\ell$ to the
$\theta_3=0$ plane. 
It follows that if there are only two distinct roots among $\zeta_0,\dotsc,\zeta_3$, then
$\coscrA(\ell)$ is a coordinate line of $\U_3$.
If three of the roots are distinct, then (up to a translation) the projection of the coamoeba
$\coscrA(\ell)$ to the  $\theta_3=0$ plane looks like~\eqref{Eq:two_triangles} so that
$\coscrA(\ell)$ consists of two triangles lying in a coordinate plane. 

For each $i=0,\dotsc,3$ define a function depending upon a point $[s\colon t]\in\P^1$ and
$\theta\in\U$ by
\[
   \varphi_i(s,t;\theta)\ =\ \left\{\begin{array}{lcl}
     \theta&\ &\mbox{if $\ell_i(s,t)=0$}\,,\\
     \arg(\ell_i(s,t))&\ &\mbox{otherwise}\end{array}\right..
\]
For each $i=0,\dotsc,3$, let $h_i$ be the image in $\U\P^3$ of $\U$
under the map 
\[
    \theta\ \longmapsto\ 
    [\varphi_0(\zeta_i,\theta),\,\varphi_1(\zeta_i,\theta),\,
     \varphi_2(\zeta_i,\theta),\,\varphi_3(\zeta_i,\theta)]\,.
\]

%%%%%%%%%%%%%%%%%%%%%%%%%%%%%%%%%%%%%%%%%%%%%%%%%%%%%%%%%%%%%%%%%
\begin{lemma}
  For each $i=0,\dotsc,3$, $h_i$ is a coordinate line in $\U\P^3$ that consists of
  accumulation points of $\coscrA(\ell)$.
\end{lemma}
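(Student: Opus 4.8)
The plan is to describe the accumulation values of the argument map $\arg\circ\phi$ as the parameter approaches $\zeta_i$, and to recognize this set as $h_i$. Fix $i$ and choose an affine coordinate $u$ on $\P^1$ with $\zeta_i$ at $u=0$ (and $\zeta_i$ finite). Because each $\ell_j$ is linear, its restriction to the chart is affine-linear in $u$: for $j$ with $\zeta_j\neq\zeta_i$ we have $\ell_j(0)=\ell_j(\zeta_i)\neq 0$, so $\arg\ell_j$ is continuous at $u=0$ with limit $\arg\ell_j(\zeta_i)=\varphi_j(\zeta_i,\theta)$; while for $j$ with $\zeta_j=\zeta_i$ (in particular $j=i$) we have exactly $\ell_j(u)=c_j u$ with $c_j\in\C^*$, so $\arg\ell_j(u)=\arg(c_j)+\arg(u)$.

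First I would treat the case where $\zeta_i$ is a simple zero, i.e.\ $\ell_j(\zeta_i)\neq0$ for all $j\neq i$. Approaching along a ray $u=re^{\sqrt{-1}\psi}$, $r\to0^+$, the $j$-th argument tends to $\arg\ell_j(\zeta_i)$ for $j\neq i$ and the $i$-th tends to $\arg(c_i)+\psi$. Writing $\theta=\arg(c_i)+\psi$, the limit in $\U^4$ is $(\varphi_0(\zeta_i,\theta),\dots,\varphi_3(\zeta_i,\theta))$, whose image in $\U\P^3$ is the point of $h_i$ indexed by $\theta$. For $r>0$ small and $u$ distinct from the finitely many other $\zeta_j$, the point $\phi(u)$ lies in $\ell\cap\T\P^3$, so its argument lies in $\coscrA(\ell)$; letting $r\to0$ exhibits the corresponding point of $h_i$ as an accumulation point of $\coscrA(\ell)$ (the points along the ray are distinct, since the coordinates $j\neq i$ still vary with $r$). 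Letting $\psi$, and hence $\theta$, range over $\U$ yields all of $h_i$.

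Next I would check that $h_i$ is a coordinate line. In the simple case $\theta\mapsto(\varphi_0(\zeta_i,\theta),\dots,\varphi_3(\zeta_i,\theta))$ equals $\theta\mapsto v+\theta e_i$, where $v$ has entries $\arg\ell_j(\zeta_i)$; this traces a line in $\U^4$ parallel to the $i$-th coordinate axis, hence parallel to a coordinate plane, so its image in $\U\P^3=\U^4/\Delta_\U$ is a coordinate line in the sense defined above. Since $e_i\notin\Delta_\U$, the parameterization remains injective modulo $\Delta_\U$, so $h_i$ is genuinely one-dimensional.

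Finally I would dispatch the degenerate configurations in which some $\zeta_j=\zeta_i$ for $j\neq i$: the same ray computation shows every coordinate with $\zeta_j=\zeta_i$ has argument tending to $\arg(c_j)+\psi$, so these coordinates move together and the limit is again $(\varphi_0(\zeta_i,\theta),\dots,\varphi_3(\zeta_i,\theta))$, now tracing the line in direction $\sum_{\zeta_j=\zeta_i}e_j$, which is again parallel to a coordinate plane. The step demanding the most care is the uniformity of the limit: I must check that as $u\to0$ the arguments of the nonvanishing forms converge regardless of the direction $\psi$, while the argument of each vanishing form is genuinely free and sweeps out all of $\U$. This is precisely the continuity of $\arg$ away from $0$ together with the exact expansion $\ell_j(u)=c_j u$ for the vanishing forms, and it is the only place where the choice of affine chart and the passage to the projective quotient must be reconciled.
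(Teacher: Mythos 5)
Your proof is correct in the main case (the four roots $\zeta_0,\dotsc,\zeta_3$ distinct), but it takes a different route from the paper. The paper's stated proof is a one-line appeal to Theorem~\ref{T:one}: the logarithmic limit set of a line consists of four rays, one for each $\zeta_i$, and $h_i$ is exactly the coamoeba $\coscrA(\ini_w\ell)$ for $w$ on the corresponding ray; the direct limit computation you carry out appears in the paper only as a one-sentence sketch (the curve $\arg\circ\phi(\zeta_i+\epsilon e^{\theta\sqrt{-1}})$ has Hausdorff distance to $h_i$ tending to $0$). What your elementary argument buys is independence from the main theorem, which is only proved later via tropical compactifications; what the paper's route buys is brevity and the conceptual identification of each $h_i$ as the coamoeba of an initial scheme, which is the point of the whole section.

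Two caveats. First, your justification that the limit is an \emph{accumulation} point (``the coordinates $j\neq i$ still vary with $r$'') fails for finitely many directions $\psi$: if the remaining roots are collinear with $\zeta_i$ in direction $e^{\sqrt{-1}\psi}$, then $\arg\circ\phi$ is constant along that ray (this is exactly the phenomenon of Lemma~\ref{L:constant}). The conclusion still holds, since every neighbourhood of a point of $h_i$ contains the image of an open subset of the punctured disc and $\arg\circ\phi$ is nowhere locally constant (for instance $\arg\bigl(\ell_i(u)/\ell_j(u)\bigr)$ is the argument of a nonconstant M\"obius function of $u$, hence an open map), but the parenthetical reason needs this patch. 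Second, in your degenerate case $\zeta_j=\zeta_i$, writing $\ell_j=a\,\ell_i$, the limiting $j$-th and $i$-th coordinates differ by the constant $\arg(a)$, whereas $\varphi_j(\zeta_i,\theta)=\varphi_i(\zeta_i,\theta)=\theta$ by definition; so the set of accumulation points you produce is the translate of $h_i$ by $\arg(a)e_j$, not $h_i$ itself, and indeed $\coscrA(\ell)$ lies in the coset $\theta_j=\arg(a)+\theta_i$, which is disjoint from $h_i$ when $\arg(a)\neq 0$. This is a defect of the statement in the degenerate configuration rather than of your computation --- the paper disposes of coincident roots separately before introducing the $h_i$ --- but your claim that ``the limit is again $(\varphi_0(\zeta_i,\theta),\dotsc,\varphi_3(\zeta_i,\theta))$'' in that case is not literally correct.
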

%%%%%%%%%%%%%%%%%%%%%%%%%%%%%%%%%%%%%%%%%%%%%%%%%%%%%%%%%%%%%%%%%

This follows from Theorem~\ref{T:one}.
%
%  Proved later
%
For the main idea, note that $\arg\circ\phi(\zeta_i+\epsilon e^{\theta\sqrt{-1}})$ for
$\theta\in\U$ is a curve in $\U\P^3$ whose Hausdorff 
distance to the line $h_i$ approaches 0 as $\epsilon\to 0$.
The \demph{phase limit set} of $\ell$ is the union of these four lines.

%%%%%%%%%%%%%%%%%%%%%%%%%%%%%%%%%%%%%%%%%%%%%%%%%%%%%%%%%%%%%%%%%
\begin{lemma}\label{L:constant}
  Suppose that the zeroes $\zeta_0,\zeta_1,\zeta_2$ are distinct.
  Then 
 \[
    \P^1\setminus\{\zeta_0,\zeta_1,\zeta_2\}\ \ni\ x
    \ \longmapsto\ 
      \arg(\ell_0(x),\ell_1(x),\ell_2(x))\ \in\ \U^3/\Delta_\U\ =\ \U\P^2
\]
  is constant along each arc
  of the circle in $\P^1$ through $\zeta_0,\zeta_1,\zeta_2$.
\end{lemma}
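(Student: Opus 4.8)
The plan is to reduce the statement to the classical inscribed angle theorem. Since the target $\U^3/\Delta_\U=\U\P^2$ is the quotient by the diagonal, the map in question is determined by the three pairwise differences $\arg\bigl(\ell_i(x)/\ell_j(x)\bigr)$ for $0\le i<j\le 2$. Thus it suffices to show that each such difference is constant along every arc of the circle $C\subset\P^1$ through $\zeta_0,\zeta_1,\zeta_2$.

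First I would choose an affine coordinate $x$ on $\P^1$ in which the three zeroes $\zeta_0,\zeta_1,\zeta_2$ are all finite (possible as they are three points of $\P^1$), and write $\ell_i(x)=a_i(x-\zeta_i)$ with $a_i\in\C^*$. Then
\[
   \arg\frac{\ell_i(x)}{\ell_j(x)}\ =\ \arg\frac{a_i}{a_j}\ +\ \arg\frac{x-\zeta_i}{x-\zeta_j}\,,
\]
and since $\arg(a_i/a_j)$ is a fixed constant, the problem becomes that of showing the directed angle $\arg\frac{x-\zeta_i}{x-\zeta_j}$ is constant along each arc of $C$.

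Next I would invoke the inscribed angle theorem. The quantity $\arg\frac{x-\zeta_i}{x-\zeta_j}$ is the oriented angle subtended at $x$ by the chord $\zeta_i\zeta_j$; in its directed form the theorem states that this angle is constant as $x$ ranges over either of the two arcs into which $\zeta_i,\zeta_j$ cut a circle through them, the two constant values differing by $\pi$. The circle $C$ passes through both $\zeta_i$ and $\zeta_j$, and the three points $\zeta_0,\zeta_1,\zeta_2$ subdivide $C$ into three arcs, each of which is contained in a single one of the two $(\zeta_i,\zeta_j)$-arcs, since its endpoints lie among the $\zeta$'s and no $\zeta_i$ or $\zeta_j$ lies in its interior. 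Hence $\arg\frac{x-\zeta_i}{x-\zeta_j}$ is constant along each of the three arcs, simultaneously for all three pairs $(i,j)$, which gives the lemma.

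The main point requiring care is the degenerate case in which $\zeta_0,\zeta_1,\zeta_2$ are collinear (equivalently $C$ is a line, as happens when one zero is at $\infty$ before the change of coordinate): here $C$ is a line together with the point at infinity, still a topological circle cut into three arcs, and the inscribed-angle statement degenerates to the observation that $(x-\zeta_i)/(x-\zeta_j)$ is real on $C$, with argument $0$ or $\pi$ constant on each arc. I expect the only genuine obstacle to be the bookkeeping that verifies each of the three arcs lies on a single side of each of the three chords—so that the inscribed-angle constancy applies to all three pairs at once—together with fixing the orientation conventions consistently.
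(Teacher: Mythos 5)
Your proof is correct, but it follows a genuinely different route from the paper's. The paper's proof is a reduction: it applies a M\"obius change of coordinates on $\P^1$ sending the three zeroes to $\infty$, $0$, $-1$ (so the circle through them becomes $\R\cup\{\infty\}$), together with a compensating rotation of $\U\P^2$, and then reads off the conclusion from the explicit computation for the line $x+y+1=0$ in Example~\ref{Ex:lineP2}, where the arguments were seen to take the constant values $(\pm\pi,0)$, $(\pm\pi,\pm\pi)$, $(0,\pm\pi)$ on the three real intervals. You instead argue invariantly: the map to $\U^3/\Delta_\U$ is controlled by the pairwise differences $\arg\bigl(\ell_i(x)/\ell_j(x)\bigr)$, the constants $\arg(a_i/a_j)$ drop out, and the directed inscribed angle theorem applied to each chord $\zeta_i\zeta_j$ gives constancy on each of the two $(\zeta_i,\zeta_j)$-arcs; your observation that each of the three arcs cut out by the three zeroes lies in a single such $(\zeta_i,\zeta_j)$-arc for every pair then finishes the argument. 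At bottom the two proofs rest on the same fact --- the inscribed angle theorem is equivalent to the reality of the cross-ratio of concyclic points, which is exactly what the normalization to $\{\infty,0,-1\}$ makes visible --- but yours is self-contained and coordinate-free, whereas the paper's is shorter because it reuses a computation already done. One small simplification: your degenerate case (the circle becoming a line in the chosen affine chart) can be avoided entirely by choosing the point at infinity off the circle $C$.
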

%%%%%%%%%%%%%%%%%%%%%%%%%%%%%%%%%%%%%%%%%%%%%%%%%%%%%%%%%%%%%%%%%

%%%%%%%%%%%%%%%%%%%%%%%%%%%%%%%%%%%%%%%%%%%%%%%%%%%%%%%%%%%%%%%%%
\begin{proof}
  After changing coordinates in $\P^1$ and translating in $\U\P^2$ (rotating coordinates), we
  may assume that these roots are $\infty, 0, -1$ and so the circle becomes the real line.
  Choosing affine coordinates, we may assume that $\ell_0=1$, $\ell_1=x$ and $\ell_2=x+1$, so
  that we are in the situation of Example~\ref{Ex:lineP2}.
  Then the statement of the lemma is the computation there for $x$ real in
  which we obtained the coordinate points $(\pi,0)$, $(\pi,\pi)$, and $(0,\pi)$.
\end{proof}
%%%%%%%%%%%%%%%%%%%%%%%%%%%%%%%%%%%%%%%%%%%%%%%%%%%%%%%%%%%%%%%%%

%%%%%%%%%%%%%%%%%%%%%%%%%%%%%%%%%%%%%%%%%%%%%%%%%%%%%%%%%%%%%%%%%
\begin{lemma}\label{L:disjoint}
 The phase limit lines $h_0$, $h_1$, $h_2$, and $h_3$ are disjoint if and only if the roots
 $\zeta_0,\dotsc,\zeta_3$ do not all lie on a circle.
\end{lemma}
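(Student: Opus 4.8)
The plan is to make each line $h_i$ explicit, reduce the pairwise intersection question to a positivity condition on a cross ratio of the roots, and then invoke the classical fact that four points are concyclic exactly when their cross ratio is real. Throughout I assume the roots $\zeta_0,\dots,\zeta_3$ are distinct, as otherwise the coamoeba degenerates into a coordinate subspace (as discussed above) and the four points trivially lie on a circle.

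First I would record that in $\U^4$ the line $h_i$ is the coordinate line whose $i$-th coordinate is free and whose remaining coordinates are the constants $a_{ij} := \arg\ell_j(\zeta_i)$ for $j\neq i$. Passing to $\U\P^3 = \U^4/\Delta_\U$, two such lines $h_i$ and $h_j$ meet precisely when some diagonal shift $c\in\U$ matches their two common frozen coordinates; writing $\{k,l\}=\{0,1,2,3\}\setminus\{i,j\}$, this is the single condition $a_{ik}-a_{jk}=a_{il}-a_{jl}$ in $\U$. Exponentiating, $h_i\cap h_j\neq\emptyset$ is equivalent to
\[
   \frac{\ell_k(\zeta_i)\,\ell_l(\zeta_j)}{\ell_k(\zeta_j)\,\ell_l(\zeta_i)}\ \in\ \R_{>0}\,.
\]
Writing each form as $\ell_m=c_m(x-\zeta_m)$ in an affine coordinate on $\P^1$ in which all roots are finite (which we may arrange, since a projective change of coordinate preserves both cross ratios and concyclicity), the constants $c_m$ cancel and this quantity is exactly the cross ratio $(\zeta_i,\zeta_j;\zeta_k,\zeta_l)$. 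Thus $h_i$ and $h_j$ meet iff this cross ratio is a positive real number.

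Next I would organize the six pairs. Because the cross ratio is unchanged when both pairs are interchanged, a pair and its complement share the same condition, so the six intersection conditions collapse to the three cross ratios $\lambda_1,\lambda_2,\lambda_3$ attached to the three ways of splitting $\{\zeta_0,\dots,\zeta_3\}$ into two pairs; setting $\lambda=\lambda_1$, these satisfy $\lambda_2=1-\lambda$ and $\lambda_3=(\lambda-1)/\lambda$. Hence \emph{some} pair of lines meets iff at least one of $\lambda$, $1-\lambda$, $(\lambda-1)/\lambda$ is a positive real.

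The crux is then the elementary observation that this last condition is equivalent to $\lambda$ being real. Indeed each of the three expressions is real iff $\lambda$ is, so a positive value forces $\lambda\in\R$; conversely, if $\lambda\in\R$ then a two-case check ($\lambda>0$ gives $\lambda_1>0$, while $\lambda<0$ gives $\lambda_2=1-\lambda>0$, using $\lambda\notin\{0,1\}$ from distinctness) produces a positive one. Combining with the classical equivalence that $\zeta_0,\dots,\zeta_3$ lie on a circle iff $\lambda\in\R$, I conclude that some pair of the $h_i$ meets iff the roots are concyclic, which is the contrapositive of the lemma. I expect the positivity refinement in this last paragraph to be the only genuinely subtle point: passing from ``real'' to ``positive'' requires using all three pairings simultaneously rather than any single cross ratio.
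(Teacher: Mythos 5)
Your proof is correct, and while it reduces to the same basic computation as the paper for a single pair of lines, it resolves the lemma by a genuinely different mechanism. Both arguments begin by observing that $h_i$ and $h_j$ meet in $\U\P^3$ exactly when the two frozen coordinates they share agree up to a common diagonal shift; the paper writes this as the pair of angle equations $\arg(\zeta_0-\zeta_2)=\arg(\zeta_1-\zeta_2)+\theta$, $\arg(\zeta_0-\zeta_3)=\arg(\zeta_1-\zeta_3)+\theta$ and reads off concyclicity from the inscribed-angle picture, which is precisely your statement that the cross ratio $(\zeta_i,\zeta_j;\zeta_k,\zeta_l)$ is a \emph{positive} real, in different clothing. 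The divergence is in the converse. The paper deduces it from Lemma~\ref{L:constant}: on a common circle $C$, the argument map is constant on arcs, so the lines attached to the two endpoints of an arc of $C\setminus\{\zeta_0,\dots,\zeta_3\}$ meet. You instead stay with cross ratios, note that the six pairwise conditions collapse to the three anharmonic values $\lambda$, $1-\lambda$, $(\lambda-1)/\lambda$, and check that reality of $\lambda$ (equivalently, concyclicity) forces at least one of the three to be positive. Your route has the advantage of being self-contained (no appeal to Lemma~\ref{L:constant}) and of making completely explicit which pairs of lines meet and why the passage from ``real'' to ``positive real'' is harmless; the paper's route is shorter because it leans on the earlier lemma and yields the geometric bonus that the meeting pairs are exactly the roots that are adjacent on the circle. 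The only caveat, which you already flag, is the standing assumption that the four roots are distinct, consistent with the surrounding discussion in the paper.
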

%%%%%%%%%%%%%%%%%%%%%%%%%%%%%%%%%%%%%%%%%%%%%%%%%%%%%%%%%%%%%%%%%

%%%%%%%%%%%%%%%%%%%%%%%%%%%%%%%%%%%%%%%%%%%%%%%%%%%%%%%%%%%%%%%%%
\begin{proof}
 Suppose that two of the limit lines meet, say $h_0$ and $h_1$.
 Without loss of generality, we suppose that we have chosen coordinates on $\U^4$ and
 $\P^1$ so that $\zeta_i\in\C$ and $\ell_i(x)=x-\zeta_i$ for $i=0,\dotsc,3$.
 Then there are points $\alpha,\beta,\theta\in\U$ such that 
 \begin{multline*}
  \qquad (\varphi_0(\zeta_0,\alpha),\,\varphi_1(\zeta_0,\alpha),\,
   \varphi_2(\zeta_0,\alpha),\,\varphi_3(\zeta_0,\alpha))\\
   =\ 
  (\varphi_0(\zeta_1,\beta),\,\varphi_1(\zeta_1,\beta),\,
   \varphi_2(\zeta_1,\beta),\,\varphi_3(\zeta_1,\beta))\ 
   +\ (\theta,\theta,\theta,\theta)\,.\qquad 
 \end{multline*}
 Comparing the last two components, we obtain
\[
   \arg(\zeta_0-\zeta_2)\ =\  \arg(\zeta_1-\zeta_2)\ +\ \theta
   \qquad\mbox{and}\qquad
   \arg(\zeta_0-\zeta_3)\ =\  \arg(\zeta_1-\zeta_3)\ +\ \theta\,,
\]
 and so the zeroes $\zeta_0,\dotsc,\zeta_3$ have the configuration below.
\[
  \begin{picture}(113,68)(-12,0)
    \put(0,1){\includegraphics[height=60pt]{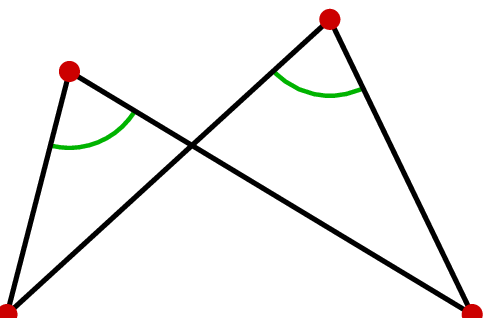}}
    \put(14,24){$\theta$}    \put(56,33.5){$\theta$}
    \put(1,52){$\zeta_3$}    \put(66,61){$\zeta_2$}
    \put(-12,0){$\zeta_0$}    \put(94,0){$\zeta_1$}
  \end{picture}
\]
 But then $\zeta_0,\dotsc,\zeta_3$ are cocircular.
 Conversely, if $\zeta_0,\dotsc,\zeta_3$ lie on a circle $C$, then by Lemma~\ref{L:constant}
 the lines $h_i$ and $h_j$ meet only if $\zeta_i$ and $\zeta_j$ are the endpoints of an arc of 
 $C\setminus\{\zeta_0,\dotsc,\zeta_3\}$. 
\end{proof}
%%%%%%%%%%%%%%%%%%%%%%%%%%%%%%%%%%%%%%%%%%%%%%%%%%%%%%%%%%%%%%%%%

%%%%%%%%%%%%%%%%%%%%%%%%%%%%%%%%%%%%%%%%%%%%%%%%%%%%%%%%%%%%%%%%%
\begin{lemma}\label{L:immersion}
 If the roots $\zeta_0,\dotsc,\zeta_3$ do not all lie on a circle, then the map
\[
   \arg\circ \phi\ \colon\ \P^1\setminus\{\zeta_0,\zeta_1,\zeta_2,\zeta_3\}
    \ \longrightarrow\ \U\P^3
\]
 is an immersion.
\end{lemma}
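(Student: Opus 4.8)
The plan is to reduce the immersion property to the injectivity of a differential and then to recognize the resulting degeneracy as the cocircularity of the roots through a Möbius transformation. First I would fix coordinates exactly as in the proof of Lemma~\ref{L:disjoint}: choose an affine coordinate $x$ on $\P^1$ so that each $\zeta_i$ lies in $\C$ and $\ell_i(x)=x-\zeta_i$. The hypothesis that the roots do not all lie on a circle forces them to be pairwise distinct, since any three (or fewer) points of $\P^1$ are automatically cocircular. In this chart $\arg\circ\phi$ sends $x$ to the class of $(\arg(x-\zeta_0),\dotsc,\arg(x-\zeta_3))$ in $\U^4/\Delta_\U=\U\P^3$. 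Because the source is a real surface and the target is a real $3$-manifold, $\arg\circ\phi$ is an immersion at $x$ precisely when its differential, a real-linear map $\C=T_x\P^1\to\R^4/\langle(1,1,1,1)\rangle$, is injective.

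Next I would compute this differential. Since $\arg(x-\zeta_i)=\Ima\log(x-\zeta_i)$, its differential at $x$ is the functional $\xi\mapsto\Ima(\xi w_i)$, where $w_i:=\tfrac{1}{x-\zeta_i}\in\C^*$. Hence the differential of $\arg\circ\phi$ carries $\xi\in\C$ to the class of $(\Ima(\xi w_0),\dotsc,\Ima(\xi w_3))$ modulo the diagonal. This map has a nonzero kernel exactly when there are a nonzero $\xi\in\C$ and a real constant $c$ with $\Ima(\xi w_i)=c$ for all $i$; subtracting these equations gives $\Ima(\xi(w_i-w_j))=0$ for all $i,j$. For a fixed nonzero $\xi$ the locus $\{z\in\C:\Ima(\xi z)=0\}$ is the real line $\R\,\xi^{-1}$ through the origin, so such a degeneracy occurs if and only if the differences $w_i-w_j$ all lie on a common line through $0$, i.e. if and only if $w_0,\dotsc,w_3$ lie on a common real affine line $L\subset\C$.

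Finally I would translate the collinearity of the $w_i$ back into a condition on the $\zeta_i$. From $w_i=\tfrac{1}{x-\zeta_i}$ we get $\zeta_i=x-\tfrac{1}{w_i}$, so $w\mapsto x-\tfrac1w$ is a Möbius transformation carrying each $w_i$ to $\zeta_i$; Möbius transformations send lines and circles to lines and circles on the Riemann sphere. Thus if $w_0,\dotsc,w_3$ lie on the line $L$, the four distinct points $\zeta_0,\dotsc,\zeta_3$ lie on the image circle-or-line, that is, they are cocircular. Taking the contrapositive: if $\zeta_0,\dotsc,\zeta_3$ do not all lie on a circle, then no nonzero $\xi$ can lie in the kernel at any $x$, the differential is everywhere injective, and $\arg\circ\phi$ is an immersion. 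The routine point to dispatch is the behavior at $x=\infty$, which I would check by the identical computation in the chart $x=1/y$, the immersion condition being local and independent of coordinates. The conceptual heart of the argument, and where I expect the only real content to lie, is the step identifying the kernel-degeneracy of the differential with collinearity of the $w_i$ and hence, via the Möbius transformation, with cocircularity of the $\zeta_i$.
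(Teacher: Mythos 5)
Your proof is correct, and it reaches the conclusion by a genuinely different route than the paper. The paper's proof is geometric: at a point $x$ it evaluates the derivative of $\arg\circ\phi$ on the two tangent directions given by the circle $C_1$ through $x,\zeta_0,\zeta_1$ and the circle $C_2$ through $x,\zeta_2,\zeta_3$; by the inscribed-angle theorem (the mechanism behind Lemma~\ref{L:constant}) these directions map to vectors of the form $(0,0,u_1,v_1)$ and $(u_2,v_2,0,0)$ modulo the diagonal, and non-cocircularity is used to exclude the single configuration $u_1=v_1$, $u_2=v_2$ in which those two classes could be dependent in the tangent space of $\U\P^3$. You instead write the differential in closed form, $\xi\mapsto\bigl(\Ima(\xi w_0),\dotsc,\Ima(\xi w_3)\bigr)$ with $w_i=1/(x-\zeta_i)$, identify a nontrivial kernel with collinearity of the $w_i$, and convert that to cocircularity of the $\zeta_i$ through the M\"obius map $w\mapsto x-1/w$. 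Your argument is more self-contained---it needs neither Lemma~\ref{L:constant} nor the transversality of $C_1$ and $C_2$ at $x$, which the paper secures only ``after possibly reordering the roots''---and it yields slightly more: since the line through the $w_i$ closes up through $\infty$, whose image under your M\"obius map is $x$, degeneracy at $x$ forces $x$ itself to lie on the circle through the four roots (this is exactly the coincidence $C_1=C_2$ in the paper's picture). What the paper's version buys is the direct link to the circle decomposition of $\P^1$ that organizes all of Section~\ref{S:lines}. The one step you should write out rather than wave at is the chart at $x=\infty$: there $\ell_i$ is represented modulo the diagonal by $\arg(1-\zeta_i y)$, the differential at $y=0$ is $\eta\mapsto\bigl(\Ima(-\zeta_0\eta),\dotsc,\Ima(-\zeta_3\eta)\bigr)$, and degeneracy occurs exactly when the $\zeta_i$ are collinear, hence cocircular.
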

%%%%%%%%%%%%%%%%%%%%%%%%%%%%%%%%%%%%%%%%%%%%%%%%%%%%%%%%%%%%%%%%%

%%%%%%%%%%%%%%%%%%%%%%%%%%%%%%%%%%%%%%%%%%%%%%%%%%%%%%%%%%%%%%%%%
\begin{proof}
 Let $x\in\P^1\setminus\{\zeta_0,\zeta_1,\zeta_2,\zeta_3\}$, which we consider to be a
 real two-dimensional manifold.
 After possibly reordering the roots, the circle $C_1$ containing
 $x,\zeta_0,\zeta_1$ meets the circle $C_2$ containing $x,\zeta_2,\zeta_3$ transversally at
 $x$.
%
%   If two circles are mutually tangent at $x$, apply inversion to make one a line.
% Then the other is a circle tangent to the line.   There are two remaining ways to 
% match the roots in pairs, if one matching gives circles that are mutually tangent 
% at $x$, then the other cannot, for one of the mutually tangent circles contains 
% the other
%
 Under the derivative of the map $\arg\circ\phi$, tangent vectors at $x$ to $C_1$ and $C_2$
 are taken to nonzero vectors $(0,0,u_1,v_1)$ and $(u_2,v_2,0,0)$ in the tangent space to
 $\U^4$. 
 Furthermore, as the four roots do not all lie on a circle, we cannot have both
 $u_1=v_1$ and $u_2=v_2$, and so this derivative has full rank two at $x$, as a map from 
 $\P^1\setminus\{\zeta_0,\zeta_1,\zeta_2,\zeta_3\}\to \U\P^3$, which proves the lemma.
\end{proof}
%%%%%%%%%%%%%%%%%%%%%%%%%%%%%%%%%%%%%%%%%%%%%%%%%%%%%%%%%%%%%%%%%

By these lemmas, there is a fundamental difference between the coamoebae of lines
when the roots of the lines $\ell_i$ are cocircular and when they are not.
We examine each case in detail.
First, choose coordinates so that $\zeta_0=\infty$.
After dehomogenizing and separately rescaling each affine coordinate (e.g.~identifying
$\U\P^3$ 
with $\U^3$ and applying phase shifts to each coordinate $\theta_1,\theta_2,\theta_3$ of
$\U^3$), we may assume that the map~\eqref{Eq:parametrization} parametrizing $\ell$ 
is 
 \begin{equation}\label{Eq:Nparametrization}
  \phi\ \colon\  \C\ni x\ \longmapsto (x-\zeta_1,\,x-\zeta_2,\,x-\zeta_3)\ \in\ \C^3\,.
 \end{equation}

Suppose first that the four roots are cocircular.
As $z_0=\infty$, the other three lie on a real line in $\C$, which we may assume is $\R$.
That is, if the four roots are cocircular, then up to coordinate change, we may assume that
the line $\ell$ is real and the affine parametrization~\eqref{Eq:Nparametrization} is also
real. 
For this reason, we will call such lines $\ell$ \demph{real lines}.
We first study the boundary of $\coscrA(\ell)$.
Suppose that $x$ lies on a contour $C$ in the upper half plane as in Figure~\ref{F:contour}
%%%%%%%%%%%%%%%%%%%%%%%%%%%%%%%%%%%%%%%%%%%%%%%%%%%%%%%%%%%%%%%%%%%%%%%%%%%%%%%%%%%%%%%
\begin{figure}[htb]
  \begin{picture}(223,65)
    \put(0,10){\includegraphics{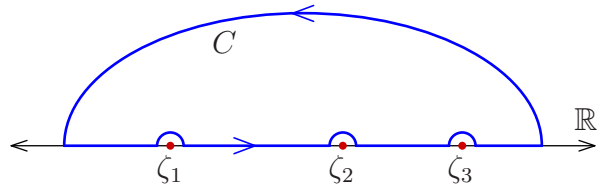}}
    \put(55,0){$\zeta_1$} \put(120,0){$\zeta_2$} \put(165,0){$\zeta_3$}
    \put(213,18){$\R$} \put(76,47){$C$}
  \end{picture}
 \caption{Contour in upper half-plane}
 \label{F:contour}
\end{figure}
%%%%%%%%%%%%%%%%%%%%%%%%%%%%%%%%%%%%%%%%%%%%%%%%%%%%%%%%%%%%%%%%%%%%%%%%%%%%%%%%%%%%%%%%
that contains semicircles of
radius $\epsilon$ centered at each root and a semicircle of radius $1/\epsilon$ centered at 0,
but otherwise lies along the real axis, for $\epsilon$ a sufficiently small positive number. 
Then $\arg(\phi(w))\in\U^3$ is constant on the four segments of $C$ lying along
$\R$ with respective values 
 \begin{equation}\label{Eq:values}
   (\pi,\pi,\pi)\,,\quad
   (0,\pi,\pi)\,,\quad
   (0,0,\pi)\,,\quad\mbox{and}\quad
   (0,0,0)\,,
 \end{equation}
moving from left to right.
On the semicircles around $\zeta_1$, $\zeta_2$, and $\zeta_3$, two of the coordinates are
essentially constant (but not quite equal to either 0 or $\pi$!), while the third
decreases from $\pi$ to 0. 
Finally, on the large semicircle, the three coordinates are nearly equal and increase from 
$(0,0,0)$ to $(\pi,\pi,\pi)$.
The image $\arg(\phi(C))$ can be made as close as we please to the quadrilateral in
$\U^3$ connecting the points of~\eqref{Eq:values} in cyclic order, when $\epsilon$ is
sufficiently small. 
Thus the image of the upper half plane under the map $\arg\circ\,\phi$ is a
relatively open membrane in $\U^3$ that spans the quadrilateral.
It lies within the convex hull of this quadrilateral, which is computed
using the affine structure induced from $\R^3$ by the quotient
$\U^3=\R^3/(2\pi\Z)^3$. 

For this, observe that its projection in any of the four coordinate directions parallel
to its edges is one of the triangles of the coamoeba of the projected line in
$\C\P^2$ of Example~\ref{Ex:lineP2}, and the convex hull of the quadrilateral is the
intersection of the four preimages of these triangles.

Because $\ell$ is real, the image of the lower half plane is
isomorphic to the image of the upper half plane, under the map
$(\theta_1,\theta_2,\theta_3)\mapsto(-\theta_1,-\theta_2,-\theta_3)$ and so 
the coamoeba is symmetric in the origin of $\U^3$ and consists of two quadrilateral patches
that meet at their vertices.
Here are two views of the coamoeba of the line where the roots are
$\infty,-1/2, 0, 3/2$:
\[
  \includegraphics[height=1.9in]{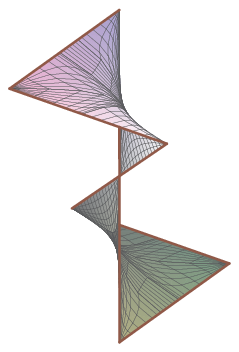}
   \qquad
  \includegraphics[height=1.9in]{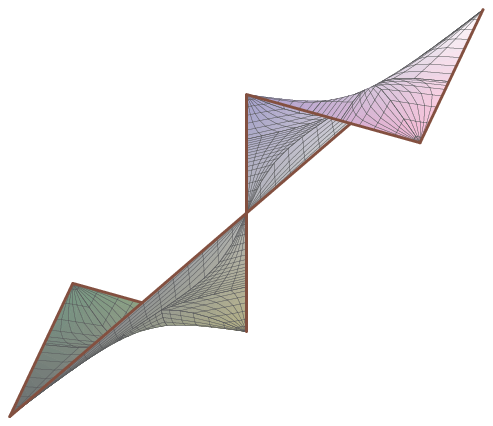}
\]

Now suppose that the roots $\zeta_0,\dotsc,\zeta_3$ do not all lie on a circle.
By Lemma~\ref{L:disjoint}, the four phase limit lines $h_1,\dotsc,h_3$ are disjoint
and the map from $\ell$ to the coamoeba is an immersion.
Figure~\ref{F:symmetric} shows two views of 
the coamoeba in a fundamental domain of $\U\P^3$ when the roots are 
$\infty, 1,\zeta,\zeta^2$, where $\zeta$ is a primitive third root of infinity.
%%%%%%%%%%%%%%%%%%%%%%%%%%%%%%%%%%%%%%%%%%%%%%%%%%%%%%%%%%%%%%%%%
\begin{figure}[htb]
  \includegraphics[height=1.9in]{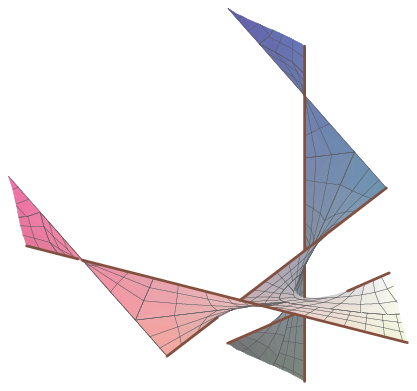}
   \qquad
  \includegraphics[height=1.9in]{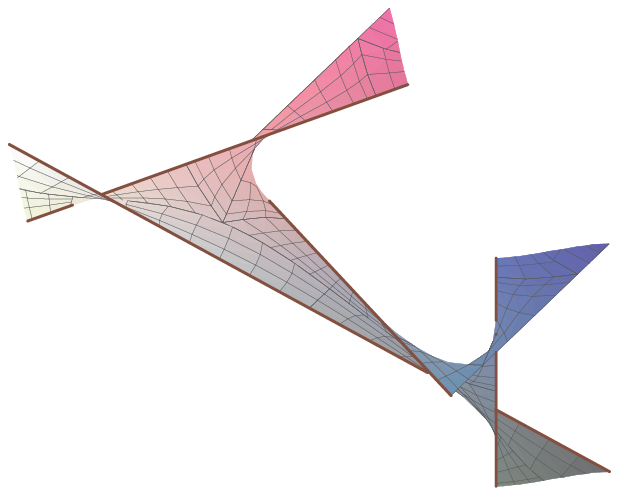}
\caption{Two views of the coamoeba of a symmetric line.}
\label{F:symmetric}
\end{figure}
%%%%%%%%%%%%%%%%%%%%%%%%%%%%%%%%%%%%%%%%%%%%%%%%%%%%%%%%%%%%%%%%%
This and other pictures of coamoebae of lines are animated on the webpage~\cite{WWW-coAmoeba}.

The projection of this coamoeba along a coordinate direction
(parallel to one of the phase limit lines $h_i$) gives a coamoeba of a line in $\T\P^2$,
as we saw in Example~\ref{Ex:lineP2}.
The line $h_i$ is mapped to the interior of one triangle and the vertices of the triangles are
the images of line segments lying on the coamoeba.
These three line segments come from the three arcs of the circle through the three roots other
than $\zeta_i$, the root corresponding to $h_i$.

\begin{proposition}\label{P:linesegments}
 The interior of the coamoeba of a general line in $\T\P^3$ contains $12$ line segments in
 triples parallel to each of the four coordinate directions. 
\end{proposition}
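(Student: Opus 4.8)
The plan is to analyze the coamoeba one coordinate direction at a time, exploiting the constancy result of Lemma~\ref{L:constant} together with the explicit two-triangle picture~\eqref{Eq:two_triangles} of Example~\ref{Ex:lineP2}. Throughout I assume the roots $\zeta_0,\dots,\zeta_3$ are not cocircular, so that the map $\arg\circ\phi$ is an immersion (Lemma~\ref{L:immersion}) and the four phase limit lines $h_0,\dots,h_3$ are disjoint (Lemma~\ref{L:disjoint}). The twelve segments will arise as the images of the twelve arcs cut out on the four circles $C_0,\dots,C_3$, where $C_i$ is the circle in $\P^1$ through the three roots $\zeta_j$ with $j\neq i$.

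Fix an index $i\in\{0,1,2,3\}$. Since the four roots are not cocircular, $\zeta_i\notin C_i$, and the three roots $\zeta_j$ ($j\neq i$) divide $C_i$ into three open arcs, each contained in $\P^1\setminus\{\zeta_0,\dots,\zeta_3\}$. Projecting $\U\P^3$ along the $i$-th coordinate direction sends $\coscrA(\ell)$ onto the coamoeba of the projected line in $\T\P^2$, which by Example~\ref{Ex:lineP2} is the pair of triangles~\eqref{Eq:two_triangles} with the three vertices recorded there. By Lemma~\ref{L:constant}, applied to the three roots lying on $C_i$, the arguments $\arg(\ell_j(x))$ for $j\neq i$ are constant up to a common additive shift along each arc, and the three arcs map to the three distinct vertices of the two triangles. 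Consequently, along a single arc the image in $\U\P^3$ can move only in the one coordinate direction indexed by $i$, so the image of that arc is a line segment parallel to the $i$-th coordinate direction.

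Next I would verify that each such segment is nondegenerate and lies in the interior of the coamoeba. For nondegeneracy it suffices to exhibit one coordinate that genuinely changes along the arc: for $i\neq 0$ the relevant quantity is $\arg(\ell_i(x))=\arg(x-\zeta_i)$, the direction from $\zeta_i$ to $x$, which cannot be constant as $x$ traverses a circular arc of $C_i$ since $\zeta_i\notin C_i$; for the excluded value the argument of $x-\zeta_j$ for a finite $\zeta_j\in C_i$ plays the same role. Hence the continuous image is a genuine interval rather than a point. Moreover, each open arc consists of interior points of the domain $\P^1\setminus\{\zeta_0,\dots,\zeta_3\}$ of the immersion, so its image lies in the relatively open membrane $\coscrA(\ell)$ and not in the phase limit set $\pls(\ell)=\bigcup_i h_i$, which by Theorem~\ref{T:one} is exactly the accumulation locus. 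Thus each arc contributes a nondegenerate segment lying in the interior.

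Finally I would count: the four circles $C_0,\dots,C_3$ contribute three segments each, for a total of twelve, and the three segments coming from $C_i$ are all parallel to the $i$-th coordinate direction, giving the claimed triples in the four coordinate directions; distinctness within a triple follows since the three arcs map to the three distinct triangle vertices. The main obstacle I anticipate is the bookkeeping in the quotient $\U\P^3=\U^4/\Delta_\U$: one must confirm that the four coordinate directions really correspond to the four projections $\ell_i$ (in the normalization~\eqref{Eq:Nparametrization} the direction $i=0$ becomes the antidiagonal while $i=1,2,3$ give the standard basis vectors), so that the statement ``only the $i$-th coordinate varies'' is interpreted correctly in every chart, and that the twelve segments are genuinely distinct, which is where the non-cocircularity hypothesis and the transversality in Lemma~\ref{L:immersion} enter.
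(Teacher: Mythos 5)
Your proposal is correct and follows essentially the same route as the paper: the paper likewise obtains the twelve segments as the images of the three arcs of each circle $C_i$ through the roots other than $\zeta_i$, using the projection to $\T\P^2$ and Example~\ref{Ex:lineP2} to see that each arc maps into a fiber over a triangle vertex, hence to a segment parallel to the $i$-th coordinate direction. Your added checks of nondegeneracy and distinctness are details the paper leaves implicit, but the underlying argument is the same.
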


The symmetric coamoeba we show in Figure~\ref{F:symmetric} has six additional line segments,
two each coming from the three longitudinal circles through a third root of unity and
$0,\infty$. 
Two such segments are visible as pinch points in the leftmost view in
Figure~\ref{F:symmetric}. 
We ask:  What is the maximal number of line segments on a coamoeba of a line in $\T\P^3$?

%%%%%%%%%%%%%%%%%%%%%%%%%%%%%%%%%%%%%%%%%%%%%%%%%%%%%%%%%%%%%%%%%%%%
\section{Structure of the Phase limit set}\label{S:phase}
%%%%%%%%%%%%%%%%%%%%%%%%%%%%%%%%%%%%%%%%%%%%%%%%%%%%%%%%%%%%%%%%%%%%

The \demph{phase limit set $\pls(X)$} of a complex subvariety $X\subset \T_N$ is the set
of all accumulation points of sequences $\{\arg(x_n)\mid n\in \N\}\subset \U_N$, where 
$\{ x_n\mid n\in \N\}\subset X$ is an unbounded sequence.
For $w\in N$, $\ini_w X\subset \T_N$ is the (possibly empty) initial scheme of $X$, whose
ideal is the initial ideal $\ini_w I$, where $I$ is the ideal of $X$.
Our main result is that the phase limit set of $X$ is the union of the coamoebae of all
its initial schemes. 
\medskip

%%%%%%%%%%%%%%%%%%%%%%%%%%%%%%%%%%%%%%%%%%%%%%%%%%%%%%%%%%%%%%%%%%%%%
\noindent{\bf Theorem~\ref{T:one}.}
  The closure of $\coscrA$ is $\coscrA(X)\cup \pls(X)$, and 
\[
   \pls(X)\ =\  \bigcup_{w\neq 0} \coscrA(\ini_w X)\,.
\]
\medskip
%%%%%%%%%%%%%%%%%%%%%%%%%%%%%%%%%%%%%%%%%%%%%%%%%%%%%%%%%%%%%%%%%%%%%

%%%%%%%%%%%%%%%%%%%%%%%%%%%%%%%%%%%%%%%%%%%%%%%%%%%%%%%%%%%%%%%%%%%%%
\begin{remark}
 This is a finite union.
 By Theorem~\ref{T:FTTG}, $\ini_w X $ is non-empty
 only when $w$ lies in the cone over the logarithmic set $\scrL^\infty(X)$, which 
 can be given the structure of a finite union of rational
 polyhedral cones such that any two points in the relative interior of the same
 cone $\sigma$ have the same initial scheme. 
 If we write $\ini_\sigma X $ for the initial scheme corresponding to a cone $\sigma$,
 then the torus $\T_{\langle\sigma\rangle}\simeq(\C^*)^{\dim \sigma}$ acts on 
 $\ini_\sigma X$ by translation (see, e.g.\ Corollary~\ref{C:iniGeometry}). 
 (Here, $\langle\sigma\rangle\subset N$ is the span of 
 $\sigma\cap N$, a free abelian group of rank $\dim \sigma$.)
 This implies that $\coscrA(\ini_\sigma X)$ is a union of orbits of
 $\coscrA(\T_{\langle\sigma\rangle})=\U_{\langle\sigma\rangle}$, and thus that 
 $\dim (\coscrA(\ini_\sigma X))\leq 2\dim(X)-\dim(\sigma)$.
\end{remark}
%%%%%%%%%%%%%%%%%%%%%%%%%%%%%%%%%%%%%%%%%%%%%%%%%%%%%%%%%%%%%%%%%%%

This discussion implies the following proposition.

%%%%%%%%%%%%%%%%%%%%%%%%%%%%%%%%%%%%%%%%%%%%%%%%%%%%%%%%%%%%%%%%%%%
\begin{proposition}
 Let $X\subset\T_N$ be a subvariety and suppose that $\T_X\subset\T_N$ is the largest
 subtorus acting on $X$.
 Then we have $\dim \coscrA(X) \leq   \min\{ \dim \T_N, 2\dim X-\dim \T_X\}$.
\end{proposition}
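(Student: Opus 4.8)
The plan is to bound the dimension of the coamoeba in two separate ways and then combine them, since the claimed bound is a minimum of two quantities. The first bound, $\dim\coscrA(X)\le\dim\T_N$, is immediate: the coamoeba lives inside the real torus $\U_N$, whose dimension equals $\dim\T_N$, so nothing need be said beyond this observation. The substance is the second bound, $\dim\coscrA(X)\le 2\dim X-\dim\T_X$, and this is where I would concentrate the argument.

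First I would recall why the torus $\T_X$ acts usefully on the coamoeba. Since $\T_X$ is a subtorus of $\T_N$ acting on $X$ by translation, its argument image $\coscrA(\T_X)=\U_{\langle T_X\rangle}$ is a subtorus of $\U_N$ of real dimension $\dim\T_X$, and this subtorus acts on $\coscrA(X)$ by translation. Thus $\coscrA(X)$ is a union of orbits of this $\dim\T_X$-dimensional subtorus, exactly as in the Remark preceding the proposition for the case of an initial scheme. I would make this translation-invariance precise: because $\arg(t\cdot x)=\arg(t)+\arg(x)$ for $t\in\T_X$, left translation by $\arg(t)$ preserves $\coscrA(X)$.

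The main step is then a dimension count for the quotient. I would pass to the quotient torus $\T_N/\T_X$ and let $\overline{X}$ be the image of $X$; this image has dimension $\dim X-\dim\T_X$, since the generic fiber of $X\to\overline{X}$ is a $\T_X$-orbit of dimension $\dim\T_X$. The argument map descends compatibly to the quotient real torus $\U_N/\U_{\langle T_X\rangle}$, and since $\coscrA(X)$ is a union of full $\U_{\langle T_X\rangle}$-orbits, one has $\dim\coscrA(X)=\dim\T_X+\dim\overline{\coscrA}$, where $\overline{\coscrA}$ is the image of $\coscrA(\overline{X})$ in the quotient real torus. For the quotient, one uses the crude general bound that the coamoeba of any variety has real dimension at most twice its complex dimension: $\arg$ is a real-algebraic map and $\overline{X}$ is a real manifold of real dimension $2\dim\overline{X}$, so its image has real dimension at most $2\dim\overline{X}=2(\dim X-\dim\T_X)$. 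Combining,
\[
  \dim\coscrA(X)\ \le\ \dim\T_X+2(\dim X-\dim\T_X)\ =\ 2\dim X-\dim\T_X\,,
\]
which is the desired estimate.

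The hard part, and the place where I would take the most care, is verifying that the fibration dimension count splits cleanly — that is, that the orbit directions in $\U_N$ contributed by $\T_X$ are genuinely transverse to the image $\overline{\coscrA}$, so that the dimensions add rather than merely bound from above. A subtlety is that $\arg(\T_X)$ could in principle overlap with directions already present in $\coscrA(\overline{X})$, which would make the naive sum an overcount; one must confirm that the maximality of $\T_X$ (it is the \emph{largest} acting subtorus) prevents any extra torus action on $\overline{X}$ that could produce such a collapse, so that the two dimension contributions are independent and the inequality holds as stated. Once transversality of the orbit directions is established, the bound follows by the additivity above, and the full proposition is the minimum of this estimate with the ambient bound $\dim\coscrA(X)\le\dim\T_N$.
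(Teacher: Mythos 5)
Your argument is correct and is essentially the paper's own: the proposition is stated there as a consequence of the preceding remark, which makes exactly your orbit/quotient dimension count (the subtorus $\T_X$ acts by translation, so $\coscrA(X)$ is a union of $\U_{\langle\T_X\rangle}$-orbits and fibers over the coamoeba of the quotient variety, whose real dimension is at most $2(\dim X-\dim\T_X)$). The transversality concern in your final paragraph is a non-issue: since $\coscrA(X)$ is a union of \emph{full} $\U_{\langle\T_X\rangle}$-orbits it equals the entire preimage of $\coscrA(X/\T_X)$ under the locally trivial quotient fibration $\U_N\to\U_N/\U_{\langle\T_X\rangle}$, so its dimension is exactly fiber dimension plus base-image dimension, and in any case only the inequality $\leq$ is needed, so the maximality of $\T_X$ plays no role in the proof beyond yielding the strongest instance of the bound.
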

%%%%%%%%%%%%%%%%%%%%%%%%%%%%%%%%%%%%%%%%%%%%%%%%%%%%%%%%%%%%%%%%%%%

We prove Theorem~\ref{T:one} in the next two subsections.

%%%%%%%%%%%%%%%%%%%%%%%%%%%%%%%%%%%%%%%%%%%%%%%%%%%%%%%%%%%%%%%%%%%
\subsection{Coamoebae of initial schemes}\label{S:initial}

We review the standard dictionary relating initial ideals to toric
degenerations in the context of subvarieties of $\T_N$~\cite[Ch.~6]{GKZ}. 
Let $X\subset\T_N$ be a subvariety with ideal $I\subset\C[M]$.
We study $\ini_w I$ and the initial schemes 
$\ini_w X=\calV(\ini_w I)\subset\T_N$ for $w\in N$.
Since $\ini_0 I=I$, so that $\ini_0X=X$, we may assume that $w\neq 0$.
As $N$ is the lattice of one-parameter subgroups of $\T_N$, $w$ corresponds to a
one-parameter subgroup written as $\C^*\ni t\mapsto t^w\in\T_N$.
Define $\calX\subset\C\times \T_N$ by
 \begin{equation}\label{E:toric_degeneration}
   \calX\ :=\ \{(t,x)\in \C^*\times\T_N\mid t^w\cdot x\in X\}\,.
 \end{equation}
The fiber of $\calX$ over a point $t\in\C^*$ is $t^{-w}X$.
Let \demph{$\overline{\calX}$} be the closure of $\calX$ in $\C\times\T_N$, and set
\demph{$X_0$} to be the fiber of $\overline{\calX}$ over $0\in\C$.

%%%%%%%%%%%%%%%%%%%%%%%%%%%%%%%%%%%%%%%%%%%%%%%%%%%%%%%%%%%%%%%%%%%%%%%%%%%%%%%%%%%%%%%%%%%%
\begin{proposition}\label{P:initial_scheme}
   $X_0\ =\ \ini_w X$.
\end{proposition}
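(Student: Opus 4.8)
The plan is to make the degeneration explicit at the level of ideals and to read off the ideal of $X_0$ directly. Fix an identification $M\cong\Z^n$, so that $\C[M]$ is a Laurent polynomial ring and the one-parameter subgroup $t^w$ acts on functions by $\xi^\bm\mapsto t^{\langle\bm,w\rangle}\xi^\bm$. For $f=\sum_\bm c_\bm\xi^\bm\in I$ I set
\[
   g_f(t,x)\ :=\ f(t^w\cdot x)\ =\ \sum_\bm c_\bm\, t^{\langle\bm,w\rangle}\xi^\bm\,,
   \qquad
   \tilde f\ :=\ t^{-w(f)}g_f\ =\ \sum_\bm c_\bm\, t^{\langle\bm,w\rangle-w(f)}\xi^\bm\,.
\]
First I would record three elementary facts: $\tilde f\in\C[M][t]$ with $\tilde f(0,x)=\ini_w f$ and $\tilde f(1,x)=f$; and $\tilde f$ is homogeneous of degree $-w(f)$ for the $\Z$-grading given by $\deg t=1$ and $\deg\xi^\bm=-\langle\bm,w\rangle$. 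Next, the automorphism $(t,x)\mapsto(t,t^w x)$ of $\C^*\times\T_N$ carries $\calX$ onto $\C^*\times X$, so the ideal of $\calX$ in the Laurent ring $\C[M][t,t^{-1}]$ is $\langle g_f\mid f\in I\rangle=\langle\tilde f\mid f\in I\rangle$, the generators differing by the units $t^{w(f)}$. Writing $\tilde I:=\langle\tilde f\mid f\in I\rangle\subset\C[M][t]$, the standard description of a scheme-theoretic closure identifies the ideal of $\overline{\calX}$ with the $t$-saturation $\tilde I:t^\infty$, and the fiber $X_0$ over $0$ is cut out by the image of this ideal under $t\mapsto 0$. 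Since $\tilde f(0,x)=\ini_w f$ shows that the image of $\tilde I$ itself under $t\mapsto 0$ is exactly $\ini_w I$, the whole proposition reduces to the claim that $\tilde I$ is already $t$-saturated, i.e. $\tilde I:t^\infty=\tilde I$.

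This saturation claim is the heart of the matter and the step I expect to be the main obstacle, as it is precisely the statement that passing to the closure introduces no extra (embedded) component over $t=0$. I would prove it using the homogeneity above together with dehomogenization at $t=1$. Because $\tilde I$ is a homogeneous ideal one may assume $h\in\C[M][t]$ is homogeneous with $t^k h\in\tilde I$; writing a membership relation $t^k h=\sum_i a_i\tilde f_i$ and setting $t=1$, the identities $\tilde f_i(1,x)=f_i\in I$ give $g:=h(1,x)\in I$. Since $h$ is homogeneous with only nonnegative powers of $t$ and dehomogenizes to $g$, comparing $t$-degrees term by term forces $h=t^j\tilde g$ for some $j\ge 0$, where $\tilde g$ is the homogenization of $g$; as $g\in I$ we have $\tilde g\in\tilde I$ and hence $h\in\tilde I$. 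It is essential here that $\tilde I$ is generated by $\tilde f$ for \emph{all} $f\in I$, and not merely for a generating set, for that is exactly what guarantees $\tilde g\in\tilde I$ for every $g\in I$.

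With the saturation claim established, the conclusion is immediate: the ideal of $\overline{\calX}$ equals $\tilde I$, the fiber $X_0$ over $0$ is defined by the image of $\tilde I$ under $t\mapsto 0$, and that image is $\langle\ini_w f\mid f\in I\rangle=\ini_w I$. Hence $X_0=\calV(\ini_w I)=\ini_w X$. I would remark that saturation of $\tilde I$ is equivalent to $t$-torsion-freeness of $\C[M][t]/\tilde I$, i.e. to flatness of $\overline{\calX}\to\C$, so the content above is that this flat family carries the expected special fiber. As a consistency check, Theorem~\ref{T:FTTG} then matches up: $X_0=\ini_w X$ is nonempty precisely when $-w$ lies in the cone over $\scrL^\infty(X)$.
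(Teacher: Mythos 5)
Your proof is correct and follows essentially the same route as the paper: describe the ideal of the family $\calX$ via the substitution $f\mapsto f(t^w\cdot x)$, normalize by $t^{-w(f)}$, and read off the fiber at $t=0$. In fact your explicit verification that $\tilde I=\langle t^{-w(f)}f(t)\mid f\in I\rangle$ is already $t$-saturated (using homogeneity for the grading $\deg t=1$, $\deg\xi^{\bm}=-\langle\bm,w\rangle$ and dehomogenization at $t=1$, which crucially requires taking generators $\tilde f$ for \emph{all} $f\in I$) supplies the justification for the step the paper's proof merely asserts, namely that $\calI\cap\C[t][M]$ is generated by the $t^{-w(f)}f(t)$ for $f\in I$.
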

%%%%%%%%%%%%%%%%%%%%%%%%%%%%%%%%%%%%%%%%%%%%%%%%%%%%%%%%%%%%%%%%%%%%%%%%%%%%%%%%%%%%%%%%%%%%

%%%%%%%%%%%%%%%%%%%%%%%%%%%%%%%%%%%%%%%%%%%%%%%%%%%%%%%%%%%%%%%%%%%%%%%%%%%%%%%%%%%%%%%%%%%%
\begin{proof}
 We first describe the ideal $\calI$ of $\calX$.
 For $\bm\in M$, the element $\xi^\bm\in\C[M]$ takes the value
 $t^{\langle \bm, w\rangle}\in\C^*$ on the element $t^w\in\T_N$, and so 
 if $x\in\T_N$, then $\xi^\bm$ takes the value 
 $t^{\langle \bm, w\rangle}\xi^\bm(x)=t^{\langle \bm, w\rangle}\bm(x)$ on $t^w x$.
 Given a polynomial $f\in\C[M]$ of the form
\[
      f\ :=\ \sum_{\bm\in\calA} c_\bm \xi^\bm\,,\qquad (c_\bm\in\C^*)
\]
  define the polynomial $f(t)\in\C[t,t^{-1}][M]$ by 
 \begin{equation}\label{Eq:deformed_poly}
   f(t)\ :=\ \sum_{\bm\in\calA} c_\bm t^{\langle \bm,w\rangle} \xi^\bm\,.
 \end{equation}
 Then $f(t)(x)=f(t^w x)$, so $\calI$ is generated by the polynomials
 $f(t)$~\eqref{Eq:deformed_poly}, for $f\in I$.
 A general element of $\calI$ is a linear combination of translates 
 $t^af(t)$ of such polynomials, for $a\in\Z$.

 If we set $w(f)$ to be the minimal exponent of $t$ occurring in $f(t)$, then 
\[
   \ini_w f\ =\ \sum_{\langle \bm,w\rangle=w(f)} c_\bm \xi^\bm\,,
\]
 and 
\[
   t^{-w(f)}f(t)\ =\ \ini_w f\ +\ 
     \sum_{\langle\bm,w\rangle>w(f)} t^{\langle\bm,w\rangle-w(f)}c_\bm \xi^\bm\,.
\]
 This shows that $\calI\cap\C[t][M]$ is generated by polynomials
 $t^{-w(f)}f(t)$, where $f\in I$.
 Since $\ini_w f\in\C[M]$ and the remaining terms are divisible by $t$, we see that the
 ideal of $X_0$ is generated by $\{\ini_w f\mid f\in \calI\}$, which 
 completes the proof.
\end{proof}
%%%%%%%%%%%%%%%%%%%%%%%%%%%%%%%%%%%%%%%%%%%%%%%%%%%%%%%%%%%%%%%%%%%%%%%%%%%%%%%%%%%%%%%%%%%%

We use Proposition~\ref{P:initial_scheme} to prove one inclusion of Theorem~\ref{T:one},
that
 \begin{equation}\label{E:supset}
    \pls(X)\ \supset\ \bigcup_{w\in N\setminus\{0\}} \coscrA(\ini_w X)\,.
 \end{equation}
Fix $0\neq w\in N$, and let $\calX$, $\overline{\calX}$, and $X_0=\ini_w X$ be as in
Proposition~\ref{P:initial_scheme}, and let $x_0\in X_0$.
We show that $\arg(x_0)\in\pls(X)$.
Since $(0,x_0)\in\overline{\calX}$, there is an irreducible curve $C\subset\calX$ with
$(0,x_0)\in\overline{C}$.
The projection of $C\subset \C^*\times\T_N$ to $\C^*$ is dominant, so there
exists a sequence 
$\{(t_n,x_n)\mid n\in\N\}\subset C$ that converges to $(0,x_0)$ with each $t_n$ real and
positive.
Then $\arg(x_0)$ is the limit of the sequence $\{\arg(x_n)\}$.

For each $n\in\N$, set $\DeCo{y_n}:=t_n^w\cdot x_n\in X$.
Since $t_n$ is positive and real, every component of $t_n^w$ is positive and real, and
so  $\arg(y_n) =  \arg(x_n)$.
Thus $\arg(x_0)$ is the limit of the sequence $\{\arg(y_n)\}$.
Since $x_n$ converges to $x_0$ and $t_n$ converges to $0$, the sequence
$\{y_n\}\subset X$ is unbounded, which implies that $\arg(x_0)$ lies in the 
phase limit set of $X$.
This proves~\eqref{E:supset}.

%%%%%%%%%%%%%%%%%%%%%%%%%%%%%%%%%%%%%%%%%%%%%%%%%%%%%%%%%%%%%%%%%%%
\subsection{Coamoebae and tropical compactifications}\label{S:tropicalCompact}
%%%%%%%%%%%%%%%%%%%%%%%%%%%%%%%%%%%%%%%%%%%%%%%%%%%%%%%%%%%%%%%%%%%%
We complete the proof of Theorem~\ref{T:one} by establishing the other inclusion,
\[
  \pls(X)\ \subset\ \bigcup_{w\in N\setminus\{0\}} \coscrA(\ini_w X)\,.
\]
Suppose that $\{x_n\mid n\in\N\}\subset X$ is an unbounded sequence.
To study the accumulation points of the sequence $\{\arg(x_n)\mid n\in\N\}$, we use a
compactification of $X$ that is adapted to its inclusion in $\T_N$.
Suitable compactifications are Tevelev's tropical
compactifications~\cite{Tevelev}, for in these the boundary of $X$ is composed of initial
schemes $\ini_w X$ of $X$ in a manner we describe below.

%%%%%%%%%%%%%%%%%%%%%%%%%%%%%%%%%%%%%%%%%%%%%%%%%%%%%%%%%%%%%%%%%%%
By Theorem~\ref{T:FTTG}, the cone over the logarithmic limit set $\scrL^\infty(X)$
of $X$ is the support of a rational polyhedral fan $\Sigma$ whose cones
$\sigma$ have the property that all initial ideals $\ini_w I$ coincide for $w$ in the
relative interior of $\sigma$.

Recall the construction of the toric variety $Y_\Sigma$ associated to a fan
$\Sigma$~\cite{Fulton},~\cite[Ch.~6]{GKZ}.
For $\sigma\in\Sigma$, set
 \begin{eqnarray*}
   \DeCo{\sigma^\vee} &:=&
      \{\bm\in M\mid \langle\bm,w\rangle\geq 0\mbox{ for all }w\in\sigma\}\,,
        \mbox{ and}\\
   \DeCo{\sigma^\perp}&:=& 
     \{\bm\in M\mid \langle\bm,w\rangle=    0\mbox{ for all }w\in\sigma\}\,.
 \end{eqnarray*}
Set $\DeCo{V_\sigma}:=\spec\C[\sigma^\vee]$ and 
$\DeCo{\calO_\sigma}:=\spec\C[\sigma^\perp]$, which is
naturally isomorphic to $\T_N/\T_{\langle\sigma\rangle}$, where 
$\DeCo{\langle\sigma\rangle}\subset N$
is the subgroup generated by $\sigma\cap N$.
The map $\bm\mapsto\bm\otimes\bm$ determines a comodule map
$\C[\sigma^\vee]\to\C[\sigma^\vee]\otimes\C[M]$, which induces the action of the
torus $\T_N$ on $V_\sigma$.
Its orbits correspond to faces of the cone $\sigma$ with the smallest orbit $\calO_\sigma$
corresponding to $\sigma$ itself.
The inclusion $\sigma^\perp\subset\sigma^\vee$ is split by the semigroup map
 \begin{equation}\label{Eq:semigroupMap}
   \sigma^\vee\ni\bm\ \longmapsto\ 
    \left\{\begin{array}{rcl}
        \bm&\ &\mbox{if }\bm\in\sigma^\perp\\
          0&\ &\mbox{if }\bm\not\in\sigma^\perp
         \end{array}\right.\,,
 \end{equation}
which induces a map $\C[M]\twoheadrightarrow\C[\sigma^\perp]$, 
and thus we have the $\T_N$-equivariant split inclusion
 \begin{equation}\label{E:splitInclusion}
   \calO_\sigma\ \hooklongrightarrow\ V_\sigma\ 
   \stackrel{\pi_\sigma}{\relbar\joinrel\twoheadlongrightarrow}\ \calO_\sigma\,.
 \end{equation}
On orbits $\calO_\tau$ in $V_\sigma$, the map $\pi_\sigma$ is simply the quotient by
$\T_{\langle\sigma\rangle}$.

If $\sigma,\tau\in\Sigma$ with $\sigma\subset\tau$, then $\sigma^\vee\supset\tau^\vee$,
so $\C[\sigma^\vee]\supset\C[\tau^\vee]$, and so $V_\sigma\subset V_\tau$.
Since the quotient fields of $\C[\sigma^\vee]$ and $\C[M]$ coincide, these
are inclusions of open sets, and these varieties $V_\sigma$ for $\sigma\in\Sigma$ glue
together along these natural inclusions to give the toric variety $Y_\Sigma$.
The torus $\T_N$ acts on $Y_\Sigma$ with an orbit $\calO_\sigma$ for each cone $\sigma$ of
$\Sigma$. 

Since $V_0=\T_N$, $Y_\Sigma$ contains $\T_N$ as a dense subset, and thus $X$ is a
(non-closed) subvariety.
Let $\overline{X}$ be the closure of $X$ in $Y_\Sigma$.
As the fan $\Sigma$ is supported on the cone over $\scrL^\infty(X)$, $\overline{X}$ will be
a tropical compactification of $X$ and $\overline{X}$ is
complete~\cite[Prop.~2.3]{Tevelev}. 
To understand the points of $\overline{X}\setminus X$, we study the intersection
$\overline{X}\cap V_\sigma$, which is defined by $I\cap\C[\sigma^\vee]$, as well as 
the intersection $\overline{X}\cap\calO_\sigma$, which is defined in $\C[\sigma^\perp]$ by
the image $I(\sigma)$ of $I\cap\C[\sigma^\vee]$ under the map 
$\C[\sigma^\vee]\twoheadrightarrow\C[\sigma^\perp]$ induced by~\eqref{E:splitInclusion}. 

%%%%%%%%%%%%%%%%%%%%%%%%%%%%%%%%%%%%%%%%%%%%%%%%%%%%%%%%%%%%%%%%%%%%
\begin{lemma}\label{L:initialIdeals}
  The initial ideal $\ini_\sigma I\subset\C[M]$ of $I$ is generated by $I(\sigma)$ under
  the inclusion $\C[\sigma^\perp]\hookrightarrow\C[M]$.
\end{lemma}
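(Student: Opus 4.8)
The plan is to directly compare two descriptions of the same ideal: on one hand $\ini_\sigma I$, the initial ideal in $\C[M]$ defined by weights $w$ in the relative interior of $\sigma$; on the other hand the ideal generated by $I(\sigma)$ under the inclusion $\C[\sigma^\perp]\hookrightarrow\C[M]$. Both are monomial-graded in a compatible way, so the natural strategy is to track how a single polynomial $f\in I$ behaves under the two constructions and then argue that generators map to generators. First I would fix $w$ in the relative interior of $\sigma$, so that $\langle\bm,w\rangle\geq 0$ exactly when $\bm\in\sigma^\vee$ and $\langle\bm,w\rangle=0$ exactly when $\bm\in\sigma^\perp$; this is the key point where the relative interior hypothesis is used, since on a face one would only get $\geq 0$ and $=0$ for the strictly larger cone.

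The heart of the argument is to understand what the map $\C[\sigma^\vee]\twoheadrightarrow\C[\sigma^\perp]$ from~\eqref{Eq:semigroupMap} does to an element of $I\cap\C[\sigma^\vee]$. Given $f\in I$, I would first clear denominators by multiplying by a suitable monomial $\xi^\bm$ so that $f':=\xi^\bm f\in\C[\sigma^\vee]$ and moreover $w(f')=0$, i.e.\ the terms of $f'$ of minimal $w$-weight are precisely those lying in $\sigma^\perp$. For such an $f'$, the semigroup map~\eqref{Eq:semigroupMap} kills every monomial with $\bm\not\in\sigma^\perp$, i.e.\ every monomial of strictly positive $w$-weight, and fixes the monomials of weight zero. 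Hence the image of $f'$ in $\C[\sigma^\perp]$ is exactly $\ini_w f'$, which under $\C[\sigma^\perp]\hookrightarrow\C[M]$ is $\ini_w f'=\xi^\bm\cdot\ini_w f$ up to the unit monomial. This is the computation that glues the two sides together: the truncation defining the initial form and the projection onto $\sigma^\perp$ are literally the same operation on a polynomial normalized to have minimal weight zero.

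With this correspondence in hand, I would argue both inclusions. For $\supseteq$: since $I(\sigma)$ is by definition the image of $I\cap\C[\sigma^\vee]$, every generator of $I(\sigma)$ arises as $\ini_w f'$ for some $f'\in I$ normalized as above, and $\ini_w f'\in\ini_w I=\ini_\sigma I$; so the ideal generated by $I(\sigma)$ lies in $\ini_\sigma I$. For $\subseteq$: $\ini_\sigma I$ is generated by the initial forms $\ini_w f$ for $f\in I$, and each such $\ini_w f$ is (up to a monomial unit) the image of a suitable normalized translate $f'\in I\cap\C[\sigma^\vee]$, hence lies in the ideal generated by $I(\sigma)$; since $\ini_\sigma I$ is torus-homogeneous the monomial units cause no trouble. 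The main obstacle I anticipate is the bookkeeping in the normalization step: one must verify that every initial form $\ini_w f$ can be realized from an element already lying in $\C[\sigma^\vee]$ with minimal weight zero, and conversely that the image ideal $I(\sigma)$ is generated by images of such normalized elements rather than by images of arbitrary elements of $I\cap\C[\sigma^\vee]$. This requires using that $\sigma^\vee$ is saturated enough that multiplying by monomials $\xi^\bm$ with $\bm\in\sigma^\perp$ (which are invertible in $\C[\sigma^\perp]$) lets one shift weights freely, together with the fact that passing from $I$ to $I\cap\C[\sigma^\vee]$ does not lose initial forms because any $f\in I$ has a monomial multiple in $\C[\sigma^\vee]$ with the same initial form up to that unit.
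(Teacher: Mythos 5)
Your proof is correct and follows essentially the same route as the paper's: normalize $f\in I$ by a monomial factor so that it lies in $\C[\sigma^\vee]$ with minimal $w$-weight zero, observe that the projection $\C[\sigma^\vee]\twoheadrightarrow\C[\sigma^\perp]$ then returns exactly $\ini_\sigma f$, and deduce both inclusions from this. The only small imprecision is that your equivalences ``$\langle\bm,w\rangle\geq 0$ iff $\bm\in\sigma^\vee$'' and ``$\langle\bm,w\rangle=0$ iff $\bm\in\sigma^\perp$'' for a single $w$ in the relative interior are valid only in the directions you actually use (the second holding for $\bm\in\sigma^\vee$, since a linear functional nonnegative on $\sigma$ that vanishes at a relative interior point vanishes on all of $\sigma$), which is the same fact the paper invokes, so no gap results.
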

%%%%%%%%%%%%%%%%%%%%%%%%%%%%%%%%%%%%%%%%%%%%%%%%%%%%%%%%%%%%%%%%%%%%

%%%%%%%%%%%%%%%%%%%%%%%%%%%%%%%%%%%%%%%%%%%%%%%%%%%%%%%%%%%%%%%%%%%%
\begin{proof}
 Let $f\in I$.
 Since $\sigma$ is a cone in $\Sigma$, we have that $\ini_\sigma f=\ini_wf$
 for all $w$ in the relative interior of $\sigma$.
 Thus for $w\in\sigma$, the function $\bm\mapsto\langle\bm,w\rangle$ on exponents of
 monomials of $f$ is minimized on (a superset of) the support of $\ini_\sigma f$, and if
 $w$ lies in the relative interior of $\sigma$, then the minimizing set is the support of 
 $\ini_\sigma f$.
 Multiplying $f$ if necessary by $\xi^{-\bm}$, where $\bm$ is some monomial of $\ini_\sigma
 f$, we may assume that for every $w\in\sigma$, the linear function
 $\bm\mapsto\langle\bm,w\rangle$ is nonnegative on the support of $f$, so that
 $f\in\C[\sigma^\vee]$, and the function is zero on the support of $\ini_\sigma f$.
 Furthermore, if $w$ lies in the relative interior of $\sigma$, then it vanishes exactly
 on the support of $\ini_\sigma f$.
 Thus $\ini_\sigma f\in\C[\sigma^\perp]$, which completes the proof.
\end{proof}
%%%%%%%%%%%%%%%%%%%%%%%%%%%%%%%%%%%%%%%%%%%%%%%%%%%%%%%%%%%%%%%%%%%%

Since $\calO_\sigma=\T_N/\T_{\langle\sigma\rangle}$, Lemma~\ref{L:initialIdeals} has the
following geometric counterpart.

%%%%%%%%%%%%%%%%%%%%%%%%%%%%%%%%%%%%%%%%%%%%%%%%%%%%%%%%%%%%%%%%%%%%
\begin{corollary}\label{C:iniGeometry}
 $\T_{\langle\sigma\rangle}$ acts (freely) on $\ini_\sigma X$ by translation with 
 $\ini_\sigma X/\T_{\langle\sigma\rangle}=\overline{X}\cap\calO_\sigma$.
\end{corollary}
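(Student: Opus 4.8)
The plan is to deduce everything from Lemma~\ref{L:initialIdeals} together with the description of $\overline{X}\cap\calO_\sigma$ recorded just above it. The essential observation is that $\sigma^\perp$ is precisely the character lattice of the quotient torus $\T_N/\T_{\langle\sigma\rangle}$, so that $\C[\sigma^\perp]=\C[M]^{\T_{\langle\sigma\rangle}}$ is the subring of $\T_{\langle\sigma\rangle}$-invariants in $\C[M]$. Indeed, for $\bm\in\sigma^\perp$ and $t\in\T_{\langle\sigma\rangle}$ the character $\xi^\bm$ takes the value $1$ on $t$, whence $\xi^\bm(tx)=\xi^\bm(x)$.

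First I would establish invariance. By Lemma~\ref{L:initialIdeals} the ideal $\ini_\sigma I$ is generated by $I(\sigma)\subset\C[\sigma^\perp]$, and every element of $\C[\sigma^\perp]$ is a $\T_{\langle\sigma\rangle}$-invariant function on $\T_N$ by the observation above. Thus if $x\in\ini_\sigma X$ and $t\in\T_{\langle\sigma\rangle}$, then each generator of $\ini_\sigma I$ vanishes at $tx$, so $tx\in\ini_\sigma X$. This shows that $\T_{\langle\sigma\rangle}$ acts on $\ini_\sigma X$ by translation; freeness is automatic, since translation by a nontrivial element of the subtorus $\T_{\langle\sigma\rangle}$ of $\T_N$ has no fixed points.

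It remains to identify the quotient. The quotient map $\T_N\twoheadrightarrow\T_N/\T_{\langle\sigma\rangle}=\calO_\sigma$ is dual to the inclusion $\C[\sigma^\perp]\hookrightarrow\C[M]$, so $\ini_\sigma X/\T_{\langle\sigma\rangle}$ is the subscheme of $\calO_\sigma$ with coordinate ring $(\C[M]/\ini_\sigma I)^{\T_{\langle\sigma\rangle}}$. Because $\T_{\langle\sigma\rangle}$ is a torus, taking invariants is exact, and since $\ini_\sigma I$ is generated by invariants this ring equals $\C[\sigma^\perp]/(\ini_\sigma I\cap\C[\sigma^\perp])$. I would then show that $\ini_\sigma I\cap\C[\sigma^\perp]=I(\sigma)$: as $\sigma^\perp$ is the kernel of the pairing $M\to\Hom(\langle\sigma\rangle,\Z)$, it is a saturated sublattice of $M$, so $M/\sigma^\perp$ is free and $\C[M]$ is a Laurent polynomial ring over $\C[\sigma^\perp]$, hence faithfully flat; consequently the contraction of the extension of $I(\sigma)$ recovers $I(\sigma)$. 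Therefore the coordinate ring of the quotient is $\C[\sigma^\perp]/I(\sigma)$, which is exactly the coordinate ring of $\overline{X}\cap\calO_\sigma$. This yields $\ini_\sigma X/\T_{\langle\sigma\rangle}=\overline{X}\cap\calO_\sigma$.

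The computations are all routine; the only point requiring genuine care is the identity $\ini_\sigma I\cap\C[\sigma^\perp]=I(\sigma)$, where one must invoke the freeness of $M/\sigma^\perp$ to guarantee that the Laurent extension is flat and that no new relations appear upon extending and contracting the ideal. Everything else is a formal consequence of Lemma~\ref{L:initialIdeals} and the exactness of invariants for torus actions on affine varieties.
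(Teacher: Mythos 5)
Your argument is correct and follows exactly the route the paper intends: the paper offers no separate proof, presenting the corollary as the immediate geometric counterpart of Lemma~\ref{L:initialIdeals} via the identification $\calO_\sigma=\T_N/\T_{\langle\sigma\rangle}=\spec\C[\sigma^\perp]$ and the fact that $\overline{X}\cap\calO_\sigma$ is cut out by $I(\sigma)$. Your write-up simply makes the implicit steps explicit (invariance because $\ini_\sigma I$ is generated by $\T_{\langle\sigma\rangle}$-invariants, exactness of invariants, and the contraction identity $\ini_\sigma I\cap\C[\sigma^\perp]=I(\sigma)$ via faithful flatness), all of which is sound.
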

%%%%%%%%%%%%%%%%%%%%%%%%%%%%%%%%%%%%%%%%%%%%%%%%%%%%%%%%%%%%%%%%%%%%

%%%%%%%%%%%%%%%%%%%%%%%%%%%%%%%%%%%%%%%%%%%%%%%%%%%%%%%%%%%%%%%%%%%%
\begin{proof}[Proof of Theorem~$\ref{T:one}$]
 Let $\theta\in\pls(X)$ be a point in the phase limit set of $X$.
 Then there exists an unbounded sequence $\{x_n\mid n\in\N\}\subset X$ with
\[
   \lim_{n\to\infty} \arg(x_n)\ =\ \theta\,.
\]
 Since $\overline{X}$ is compact, the sequence $\{x_n\mid n\in\N\}$ has an accumulation
 point $x$ in $\overline{X}$.
 As the sequence is unbounded, $x\not\in\calO_0$, and so $x\in\overline{X}\setminus X$.
 Thus $x$ is a point of $\overline{X}\cap\calO_\sigma$ for some  cone $\sigma\neq 0$ of
 $\Sigma$.
 Replacing $\{x_n\}$ by a subsequence, we may assume that $\lim_n x_n=x$.

 Because the map $\pi_\sigma$~\eqref{E:splitInclusion} is continuous and is the identity on
 $\calO_\sigma$, we have that $\{\pi_\sigma(x_n)\}$ converges to $\pi_\sigma(x)=x$, and thus
 \begin{equation}\label{Eq:limits}
   \pi_\sigma(\theta)\ =\ \pi_\sigma \bigl(\lim_{n\to\infty}\arg(x_n)\bigr)\ =\ 
    \arg\bigl(\lim_{n\to\infty}\pi_\sigma (x_n)\bigr)\ =\ 
   \arg(x)\ \in\ \coscrA(\overline{X}\cap\calO_\sigma)\,.
 \end{equation}
 Corollary~\ref{C:iniGeometry} implies that $\coscrA(\overline{X}\cap\calO_\sigma)=
 \coscrA(\ini_\sigma X)/\U_\sigma$, as $\U_{\langle\sigma\rangle}=\arg(\T_{\langle\sigma\rangle})$.
 Recall that on $\calO_0$, $\pi_\sigma$ is the quotient by $\T_{\langle\sigma\rangle}$.
 Thus we conclude from~\eqref{Eq:limits} that $\theta\in\coscrA(\ini_\sigma X)$
 which completes the proof of Theorem~\ref{T:one} as $\ini_\sigma X =\ini_w X$ for any $w$
 in the relative interior of $\sigma$.
\end{proof}
%%%%%%%%%%%%%%%%%%%%%%%%%%%%%%%%%%%%%%%%%%%%%%%%%%%%%%%%%%%%%%%%%%%%

%%%%%%%%%%%%%%%%%%%%%%%%%%%%%%%%%%%%%%%%%%%%%%%%%%%%%%%%%%%%%%%%%%%%
\begin{example}
 In~\cite{Nisse}, the closure of a hypersurface coamoeba $\coscrA(\calV(f))$ for
 $f\in\C[M]$ was shown to contain a finite
 collection of \demph{codual hyperplanes}.
 These are translates of codimension one subtori
 $\U_\sigma$ for $\sigma$ a cone in the normal fan of the Newton polytope of $f$
 corresponding to an edge.
 By Theorem~\ref{T:one}, these translated tori are that part of the phase limit set of $X$
 corresponding to the cones $\sigma$ dual to the edges, specifically $\coscrA(\ini_\sigma X)$.
 Since $\sigma$ has dimension $n{-}1$, the torus $\T_\sigma$ acts with finitely
 many orbits on $\ini_\sigma X$, which is therefore a union of finitely many translates
 of $\T_\sigma$. 
 Thus $\coscrA(\ini_\sigma X)$ is a union of finitely many translates of
 $\U_\sigma$.

 The logarithmic limit set $\scrL^\infty(C)$ of a curve $C\subset\T_N$ is a finite
 collection of points in $\bS_N$.
 Each point gives a ray in the cone over $\scrL^\infty(C)$, and the components of $\pls(C)$
 corresponding to a ray $\sigma$ are finitely many translations of the dimension one
 subtorus $\U_\sigma$ of $\U_N$, which we referred to as lines in Section~\ref{S:lines}.
 These were the lines lying in the boundaries of the coamoebae $\coscrA(\ell)$ of the lines
 $\ell$ in $\T^2$ and $\T^3$.
\end{example}
%%%%%%%%%%%%%%%%%%%%%%%%%%%%%%%%%%%%%%%%%%%%%%%%%%%%%%%%%%%%%%%%%%%%

%%%%%%%%%%%%%%%%%%%%%%%%%%%%%%%%%%%%%%%%%%%%%%%%%%%%%%%%%%%%%%%%%%%%%%%%%%%%%
\def\cprime{$'$}
\providecommand{\bysame}{\leavevmode\hbox to3em{\hrulefill}\thinspace}
\providecommand{\MR}{\relax\ifhmode\unskip\space\fi MR }
% \MRhref is called by the amsart/book/proc definition of \MR.
\providecommand{\MRhref}[2]{%
  \href{http://www.ams.org/mathscinet-getitem?mr=#1}{#2}
}
\providecommand{\href}[2]{#2}

%%%%%%%%%%%%%%%%%%%%%%%%%%%%%%%%%%%%%%%%%%%%%%%%%%%%%%%%%%%%%%%%%%%%%%%%%%%%%

\end{document}